\documentclass[12pt]{article}
\usepackage[affil-it]{authblk}

\usepackage{amsthm}
\usepackage{amssymb}
\usepackage{amsmath}
\usepackage{amsfonts}
\usepackage{cancel}
\usepackage{times}
\usepackage{bbm}
\usepackage{cite}
\usepackage{color, soul}
\usepackage{hyperref}
\hypersetup{
	colorlinks   = true,
	citecolor    = black,
	linkcolor    = blue
}
\usepackage{tikz}
\usetikzlibrary{arrows.meta,positioning,calc}
\newcommand{\dm}[1]{{\color{red}{[[{\bf Dan:} #1]]}}}

\usepackage[paper=a4paper, top=2.5cm, bottom=2.5cm, left=2.5cm, right=2.5cm]{geometry}

\def\qed{\hfill $\vcenter{\hrule height .3mm
		\hbox {\vrule width .3mm height 2.1mm \kern 2mm \vrule width .3mm
			height 2.1mm} \hrule height .3mm}$ \bigskip}

\def \RR {\mathbb R}

\def \eps {\varepsilon}
\def \vphi {\varphi}

\newtheorem{theorem}{Theorem}
\newtheorem{lem}{Lemma}

\newtheorem{prop}{Proposition}

\theoremstyle{definition}

\theoremstyle{remark}
\newtheorem{remark}{Remark}

\long\def\symbolfootnotetext[#1]#2{\begingroup
	\def\thefootnote{\fnsymbol{footnote}}\footnotetext[#1]{#2}\endgroup}

\begin{document}
	\title{Geometric obstructions to Lipschitz transport between weighted Hessian $\mathrm{CD}(\kappa,\infty)$ manifolds}
	\author{William Dudarov\thanks{Department of Mathematics, University of Washington, Seattle, WA 98195\\Email address: \texttt{wdudarov@uw.edu}} ~~and Dan Mikulincer\thanks{Department of Mathematics, University of Washington, Seattle, WA 98195\\Email address: \texttt{danmiku@uw.edu}}  }
	\date{}
	\maketitle
	\vspace{-1.5cm}
	\begin{abstract}
		We construct a weighted Riemannian manifold $(\mathbb R^2,g,\mu)$ satisfying $\mathrm{CD}(1/2,\infty)$, the curvature-dimension condition, with the following property: if
		$\gamma$ denotes a centered Gaussian measure on $\mathbb R^2$, then there is no Lipschitz map
		$T:(\mathbb R^2,\|\cdot\|) \to (\mathbb R^2,g)$ satisfying $T_\#\gamma=\mu$.
		
		Building on this, we prove a Weyl-type asymptotic law for the eigenvalues of the weighted Laplacian $-\Delta_{g,\mu}$ and show that they are asymptotically negligible when compared to the eigenvalues of $-\Delta_{\gamma}$. These results give strong counterexamples to two questions of E. Milman and complement the recent counterexample of Aryan.
		
		
		
	\end{abstract}
	
	\section{Introduction and main results}
	Let $\gamma^d = \frac{1}{(2\pi)^{\frac{d}{2}}}e^{-\|x\|^2/2}$ be the standard Gaussian on $\RR^d$ and let $\mu = e^{-\vphi(x)}dx$ be another probability measure on $\mathbb{R}^d$. We say that $\mu$ is more log-concave than the Gaussian if it has a convex support and for $\mu$-almost any $x$, $\nabla^2 \vphi(x) \succeq \mathrm{I}_d$. When this
	holds, the celebrated Contraction Theorem of Caffarelli \cite{caffarelli2000monotonicity} states that $\mu$ is a contraction of $\gamma^d$: there exists a map $T:\RR^d\to\RR^d$ with
	$\|\nabla T\|_{\mathrm{op}}\leq 1$ almost everywhere and
	$T_\#\gamma^d=\mu$. Moreover, Caffarelli showed that one may take
	$T=T_{\mathrm{opt}}$, the optimal transport map minimizing
	$
	\int_{\RR^d}\|x-T(x)\|^2\,d\gamma^d(x)
	$
	among all maps transporting \(\gamma^d\) to \(\mu\).
	
	This contraction theorem has proved to be widely influential, and has attracted much attention over the years. In particular, it has inspired a large body of work extending Caffarelli's result in several directions: to broader classes of measures \cite{colombo2017lipschitz,kolesnikov2011mass,carlier2026optimal}, beyond the framework of optimal transport \cite{kim2012generalization,mikulincer2023lipshitz,neeman2022lipschitz,brigati2025heat,shenfeld2024exact}, to tail estimates for the transport maps \cite{colombo2021bounds,fathi2024growth}, and to alternative proofs \cite{chewi2024entropic,kolesnikov2013sobolev,fathi2020proof}. As a recent example for an application, \cite{song2026, HuaSongTudose2026} used this contraction as a key ingredient in proving Talagrand's convexity conjecture. At the same time, most of these results remain essentially Euclidean, relying on the fact that both source and target measures are defined on the same Euclidean metric space $\RR^d$.  While there have been some works on non-Euclidean settings \cite{fathi2024transportation,lopez2025bakry,serres2026contractive,ge2025generalization}, they leave open the following concrete question: 
	\begin{center}
		\emph{Is there an analogue of Caffarelli's theorem for Riemannian manifolds?}
	\end{center}
	
	One such possible analogue, for different Riemannian structures on $\RR^d$, was suggested by E. Milman in \cite{milman2018spectral}. Specifically, in \cite[Conjecture 2*]{milman2018spectral} one considers a Riemannian metric $g$, the weighted metric space $(\RR^d,g,\mu)$, replacing the log-concavity assumption by the so-called \textit{curvature-dimension condition}. To define this condition, let $\mathrm{Vol}_g = \sqrt{\det(g)}$ be the volume measure associated to $g$ and write $\mu = e^{-F(x)}d\mathrm{Vol}_g$. Now, define the generalized Ricci tensor 
	$$\mathrm{Ric}_{g,\mu}  = \mathrm{Ric}_{g} + \nabla^2_gF,$$
	where $\mathrm{Ric}_{g}$ is the Ricci curvature tensor of $g$. We say that $(\RR^d,g,\mu)$ satisfies the curvature-dimension condition $\mathrm{CD}(\kappa,\infty)$ for some $\kappa > 0$, if for any $x \in \RR^d$,
	$$\mathrm{Ric}_{g,\mu} \succeq \kappa g,$$
	as quadratic forms. For $\kappa>0$, let $\gamma_\kappa^d$
	denote the Gaussian measure with covariance $\kappa^{-1}\mathrm{I}_d$, and let
	$\|\cdot\|$ denote the Euclidean distance. Milman's question asks whether,
	under the condition $\mathrm{CD}(\kappa,\infty)$, there exists a contraction
	\begin{equation} \label{eq:contraction}
		T:(\RR^d,\|\cdot\|,\gamma_\kappa^d)\to(\RR^d,g,\mu).
	\end{equation}
	That is, a map transporting $\gamma_\kappa^d$ to $\mu$ and satisfying
	$d_g(T(x),T(y))\leq \|x-y\|$. As explained in \cite[Section 6]{milman2018spectral} the existence of a map satisfying \eqref{eq:contraction} should be thought of as a tentative question, where a more complete conjecture might also impose separate assumptions on the Ricci curvature. Nevertheless, as we now explain, the curvature-dimension setting can serve as a natural Riemannian analogue for Caffarelli's theorem, and so this is the focus of this work.
	
	It is readily checked that when $g$ is Euclidean, the curvature dimension condition reduces to the usual strong log-concavity condition. Indeed,
	$(\RR^d,\|\cdot\|,\mu)$ satisfies $\mathrm{CD}(1,\infty)$ if and only if $\mu$ is more log-concave than the standard Gaussian. Thus Milman's question can be viewed as a natural Riemannian analogue of Caffarelli's theorem. Moreover, when $d = 1$, the Riemannian metric can be reduced to the Euclidean metric by a change of coordinates, which gives a positive answer to \eqref{eq:contraction} in this case. In contrast, our main result provides a strong counterexample, already in dimension $d = 2$. We construct a weighted Riemannian
	manifold satisfying $\mathrm{CD}(\frac{1}{2},\infty)$ for which no Lipschitz transport map from the corresponding Gaussian source can exist; in particular, there are no contractions.
	
	\begin{theorem}\label{thm:nocontract}
		There exists a Riemannian metric $g$ and a probability measure $\mu$ on $\RR^2$ such that $(\RR^2,g,\mu)$ satisfies $\mathrm{CD}(\frac{1}{2},\infty)$. Moreover, there is no Lipschitz map $T:(\RR^2,\|\cdot\|)\to(\RR^2,g)$ satisfying $T_\#\gamma_{1/2}^2 = \mu$. 
	\end{theorem}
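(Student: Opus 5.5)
The plan is to detect a Lipschitz transport spectrally. Suppose $T:(\RR^2,\|\cdot\|)\to(\RR^2,g)$ is $L$-Lipschitz with $T_\#\gamma_{1/2}^2=\mu$. For any $f$ in the form domain of $-\Delta_{g,\mu}$, the pullback $f\circ T$ has the same $L^2$ norm and the same orthogonality relations, with $\int|f|^2d\mu=\int|f\circ T|^2d\gamma_{1/2}^2$. Its Dirichlet energy satisfies $\int|\nabla(f\circ T)|^2d\gamma_{1/2}^2\le L^2\int|\nabla_g f|_g^2d\mu$. The Courant--Fischer min-max principle then gives
$$\lambda_k(-\Delta_{\gamma_{1/2}})\le L^2\lambda_k(-\Delta_{g,\mu})\quad\text{for all }k,$$
equivalently $N_{g,\mu}(\lambda)\le N_{\gamma}(L^2\lambda)$ for the counting functions. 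The Gaussian spectrum is explicit: the eigenvalues are $\tfrac12(m+n)$, so $N_\gamma(\lambda)\asymp\lambda^2$. It therefore suffices to build $(\RR^2,g,\mu)$ satisfying $\mathrm{CD}(\tfrac12,\infty)$ with $N_{g,\mu}(\lambda)/\lambda^2\to\infty$, i.e.\ with a superabundance of small eigenvalues. This is the "Weyl law / negligible eigenvalues" statement mentioned in the abstract.

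\textbf{Construction.} I will take a rotationally symmetric warped product $g=dr^2+\phi(r)^2d\theta^2$ with $\phi(r)=r$ near $0$, for smoothness at the origin, and $\phi$ growing very fast, e.g.\ $\phi(r)=e^{cr^2}$ for large $r$. The measure is $\mu=e^{-F(r)}d\mathrm{Vol}_g$ with $F$ radial. The intuition is that circles of radius $r$ become enormously long. Angular modes $e^{im\theta}$ supported where $\phi$ is large then have energy $\approx m^2/\phi(r)^2$, which is tiny, and this produces many low eigenvalues. Negative curvature is the price: the Gauss curvature is $K=-\phi''/\phi\approx-(4c^2r^2+2c)$. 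In the basis $\{\partial_r,\phi^{-1}\partial_\theta\}$ the weighted Ricci tensor is diagonal, with entries
$$K+F''\quad\text{and}\quad K+F'\,\frac{\phi'}{\phi}=K+2crF'.$$
Taking $F$ of quartic growth, say $F=ar^4+br^2$ with $a\gg c^2$, makes both entries at least $\tfrac12$ for large $r$. On the compact region near the origin I will interpolate by hand, choosing $F$ convex enough there to dominate the bounded curvature. Finally I will normalize $\mu$ to be a probability measure, which is possible since $\int e^{-F}\phi\,dr<\infty$.

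\textbf{Eigenvalue count.} By separation of variables the spectrum of $-\Delta_{g,\mu}$ is the union over $m\in\mathbb Z$ of the spectra of the one-dimensional Sturm--Liouville operators
$$-u''-\Big(\frac{\phi'}{\phi}-F'\Big)u'+\frac{m^2}{\phi^2}u$$
on $L^2(e^{-F}\phi\,dr)$. To bound $N_{g,\mu}(\lambda)$ from below I will use test functions $u(r)e^{im\theta}$ with $u$ a bump localized in a window $[R,R+1]$. The Rayleigh quotient of such a function is $O(1)+m^2/\phi(R)^2$, and for different $m$ the functions are orthogonal. So for each $\lambda$ large, all $|m|\lesssim\sqrt{\lambda}\,\phi(R)$ contribute, and using about $\sqrt\lambda$ disjoint radial windows inside the region the test functions may probe gives further orthogonal families.

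\textbf{Main obstacle.} The difficulty is that the test functions must have bounded Rayleigh quotient; the weight $e^{-F}$ does not directly hurt this, since Rayleigh quotients are normalized, but the drift term must be controlled. The danger is that the radial part costs energy of order $F'(R)^2$ once the bumps move into the tail. I would therefore pick $R=R(\lambda)$ as large as allowed by $F'(R)^2\lesssim\lambda$, so $R\sim\lambda^{1/6}$ for $F\sim r^4$. This gives $N_{g,\mu}(\lambda)\gtrsim\sqrt\lambda\,e^{c\lambda^{1/3}}$, which beats $L^4\lambda^2$ and yields the contradiction. The precise choice of $c$, $a$ and the gluing near the origin are routine. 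I expect the delicate step to be verifying the drift estimate, for which I will work with the unitarily conjugated Schrödinger form $-v''+V_m v$ after substituting $u=(e^{-F}\phi)^{-1/2}v$, and checking that $V_m\le C\lambda$ on the chosen window.
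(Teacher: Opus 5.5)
Your argument is correct, but it proves the theorem with a different example and a different obstruction from the paper's.

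\textbf{The paper's route.} The paper takes $g=\nabla^2W$ from the moment-measure problem for $\nu\propto e^{-(\sqrt{1+\|x\|^2}-1)}dx$ and gets $\mathrm{CD}(\tfrac12,\infty)$ from Kolesnikov's Hessian criterion. It then rules out Lipschitz maps directly, without spectral theory. A Lipschitz bound caps the Riemannian Jacobian, and the area formula plus the Monge--Amp\`ere equation force $E(T(x))\le\|x\|^2/2+C_L$. Hence $\mu(E\ge t+C_L)\le e^{-t/2}$. This contradicts $\mu(E\ge t)\gtrsim\sqrt{\log t}\,e^{-t/2}$, which comes from the ODE asymptotics $p(s)\to1$, $q(s)\sim\log s$.

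\textbf{Your route.} You use Milman's Lipschitz-pullback comparison $N_{g,\mu}(\lambda)\le N_\gamma(L^2\lambda)\lesssim L^4\lambda^2$, which is the spectral implication the paper mentions but deliberately avoids for this theorem. You apply it to an explicit warped product.
\begin{itemize}
\item Your weighted Ricci computation is right. For instance, $\phi=re^{cr^2}$ globally with $F=ar^4+br^2$, $a\ge c^2/3$, $b\ge 3c+\tfrac14$, makes both diagonal entries at least $\tfrac12$, with no gluing needed.
\item In the conjugated form, the mode-$m$ potential is $\tfrac14U'^2-\tfrac12U''+m^2/\phi^2$ with $U=F-\log\phi$. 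Its radial part is $\approx 4a^2r^6$.
\item So a single window at $R\asymp\lambda^{1/6}$, together with the $\asymp\sqrt\lambda\,\phi(R)$ angular modes, gives $N_{g,\mu}(\lambda)\gtrsim e^{c'\lambda^{1/3}}$. These modes are orthogonal for both $L^2(\mu)$ and the Dirichlet form, by rotation invariance.
\end{itemize}

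\textbf{What each approach buys.} Your route is elementary: it needs no existence, uniqueness or regularity theory for moment measures, no asymptotics for $p,q$, and no non-compact Weyl law. It also gives a far stronger spectral failure, $\lambda_k=O((\log k)^3)$ rather than $\sqrt{k}/\log(k)^{1/4}$. The mechanism is super-polynomial area growth paid for by a strongly convex weight, and it is robust. For example, $\mathbb H^2$ with $F=\tfrac34 r^2$ is $\mathrm{CD}(\tfrac12,\infty)$ with constant curvature $-1$ and works the same way, so the blow-up $K\to-\infty$ in your metric is not essential. The paper's route buys three things:
\begin{itemize}
\item its counterexample lies in the transport-generated Hessian class, where Kolesnikov's theorem made a Riemannian Caffarelli theorem most plausible;
\item its obstruction is geometric rather than spectral;
\item it yields a sharp Weyl law.
\end{itemize}

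\textbf{Two harmless slips.}
\begin{itemize}
\item A bump $u$ on $[R,R+1]$ in the original variable does not have Rayleigh quotient $O(1)+m^2/\phi(R)^2$, because the drift costs $\asymp F'(R)^2$. That is why the conjugated computation you propose at the end is the right one.
\item With $R\asymp\lambda^{1/6}$ there are only $\asymp\lambda^{1/6}$ windows, not $\sqrt\lambda$. This does not matter, since one window already suffices.
\end{itemize}
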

	In our construction, $g$ is a Hessian metric arising from the moment-measure construction, as in \cite{cordero2015moment}, which we review below. The analysis of the curvature-dimension condition is based on recent properties established in \cite{kolesnikov2014hessian} for such Hessian manifolds, while the non-existence of a Lipschitz transport map follows from a geometric argument relating the growth of any such map to the tails of the measures. 
    
    Let us emphasize that Theorem \ref{thm:nocontract} rules out the existence of \emph{any Lipschitz map}, and not only contractions as in \eqref{eq:contraction}. In light of this, the constant $\frac{1}{2}$ is inessential. Indeed, for any $\kappa>0$, a linear map sends
$\gamma^2_{1/2}$ to $\gamma^2_\kappa$ and vice versa. Hence the non-existence of
Lipschitz transports from $\gamma^2_{1/2}$ immediately implies the same
statement for $\gamma_\kappa^2$. Moreover, by
replacing $g$ with $g_\kappa=(2\kappa)^{-1}g$, $(\RR^2,g_\kappa,\mu)$ satisfies $\mathrm{CD}(\kappa,\infty)$, and Theorem \ref{thm:nocontract} applies to this space as well.

  We further remark that many comparable non-existence results for Lipschitz transports from the Gaussian rely on rather straightforward obstructions: either the target measure fails to satisfy a functional inequality that would be inherited under a Lipschitz transport, or the tails of the target are too heavy to be captured by a Lipschitz image of the Gaussian. Here, in contrast, Theorem \ref{thm:nocontract} is a consequence of intrinsic properties of the measured Riemannian space
	
	One motivation for Milman's question comes from a possible comparison principle for eigenvalues of the Laplacian. For a weighted Riemannian manifold $(\RR^d,g,\mu=e^{-F}d\mathrm{Vol}_g)$ define the weighted Laplacian
	$$\Delta_{g,\mu}f:= \Delta_gf -\langle \nabla_g F,\nabla_g f\rangle_g,$$
	and let $\lambda_k(\RR^d,g,\mu)$ stand for the $k^{th}$ eigenvalue of the positive semi-definite operator $-\Delta_{g,\mu}$. By \cite[Theorem 1.7]{milman2018spectral}, if there exists an $L$-Lipschitz $T:(\RR^d,\|\cdot\|,\gamma_\kappa^d)\to(\RR^d,g,\mu)$, then for any $k \in \mathbb{N}$,
	\begin{equation} \label{eq:eigenvaluecomparision}
		\lambda_k(\RR^d,g,\mu) \geq \frac{1}{L^2}\lambda_k(\RR^d,\|\cdot\|,\gamma_\kappa^d).
	\end{equation}
	
	Parallel to the contraction question in \eqref{eq:contraction},
	\cite[Conjecture 1*]{milman2018spectral} asks whether this eigenvalue comparison always holds, with $L = 1$, under the sole assumption that
	$(\RR^d,g,\mu)$ satisfies $\mathrm{CD}(\kappa,\infty)$. A closer analysis of the construction in Theorem \ref{thm:nocontract} gives a counterexample to this spectral question as well. Indeed, for the spaces we construct, the eigenvalues $\lambda_k(\RR^d,\|\cdot\|,\gamma_\kappa^d)$ grow much faster than
	$\lambda_k(\RR^d,g,\mu)$, so for \emph{any} $L$, the inequality in \eqref{eq:eigenvaluecomparision} fails for large enough $k$.
	
	\begin{theorem}\label{thm:nocomparison}
		Let $g$ and $\mu$ be as in Theorem \ref{thm:nocontract}. Then
		$$
		\lambda_k(\RR^2,g,\mu)
		=
		(\sqrt[4]{2}+o(1))\frac{\sqrt{k}}{\log(k)^{1/4}}.
		$$
		Consequently,
		$$
		\lim_{k\to\infty}
		\frac{\lambda_k(\RR^2,g,\mu)}
		{\lambda_k(\RR^2,\|\cdot\|,\gamma_{1/2}^2)}
		=0.
		$$
	\end{theorem}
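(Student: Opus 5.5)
The plan is to reduce the spectral problem to a Schrödinger operator whose eigenvalue counting function can be computed by a phase-space volume, and then invert that count. By Theorem~\ref{thm:nocontract}, $(\RR^2,g,\mu)$ is the Hessian space coming from the moment-measure construction of \cite{cordero2015moment}. So there is a convex $\vphi$ with $(\nabla\vphi)_\#e^{-\vphi}dx=\mu$, and $\nabla\vphi$ is an isometry of weighted manifolds
$$(\RR^2,\nabla^2\vphi,e^{-\vphi}dx)\to(\RR^2,g,\mu).$$
Hence $\lambda_k(\RR^2,g,\mu)$ equals the $k$-th eigenvalue of
$$Lf=e^{\vphi}\,\mathrm{div}\big(e^{-\vphi}(\nabla^2\vphi)^{-1}\nabla f\big).$$
I will work with this explicit operator. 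If, as I expect, the construction is separable, say $\vphi(x)=\psi(x_1)+\psi(x_2)$, then $L$ is a sum of two commuting one-dimensional operators. The spectrum is then $\{\nu_i+\nu_j\}$, where $\nu_i$ are the eigenvalues of the one-dimensional operator $\ell u=e^{\psi}(e^{-\psi}u'/\psi'')'$.

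\textbf{Step 1 (one-dimensional Schrödinger form).} Pass to the arclength variable $s=\int\sqrt{\psi''}$. Then conjugate by the square root of the density, $u=e^{F/2}w$. This turns $-\ell$ into $-\frac{d^2}{ds^2}+V(s)$ on $L^2(ds)$, with
$$V=\tfrac14 (F')^2-\tfrac12 F'',$$
where $F$ is the density of $e^{-\psi}dx$ with respect to $ds$. From the explicit $\psi$ I will compute the growth of $V$ at $\pm\infty$. For the claimed answer this growth should be essentially quadratic with a logarithmic correction.

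\textbf{Step 2 (counting function).} For the one-dimensional operator, the Bohr--Sommerfeld/WKB count is
$$n(\nu)=\frac1\pi\int(\nu-V(s))_+^{1/2}\,ds\,(1+o(1)).$$
I will justify this by Dirichlet--Neumann bracketing on intervals of length tending to zero relative to the scale of variation of $V$, using the min-max principle. For the two-dimensional operator I use the two-dimensional Weyl law
$$N(\lambda)\sim\frac{1}{4\pi}\int_{\RR^2}(\lambda-V_2)_+\,d\mathrm{Vol}$$
for the analogous potential $V_2$; in the separable case this is equivalent to convolving the one-dimensional counts. The theorem is equivalent to
$$N(\lambda)\sim\lambda^2\sqrt{\log\lambda}\,/\sqrt2.$$
Indeed, solving $N(\lambda_k)=k$ gives $\lambda_k^2\sim\sqrt2\,k/\sqrt{\log k}$, using $\log\lambda_k\sim\frac12\log k$. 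So the task is to evaluate the phase-space integral and show the $\sqrt{\log\lambda}$ enhancement. I expect this enhancement to come from the region where $V$ is below $\lambda$ having volume of order $\lambda\sqrt{\log\lambda}$ rather than $\lambda$.

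\textbf{Step 3 (comparison).} For $\gamma^2_{1/2}$ the Ornstein--Uhlenbeck eigenvalues are $(n_1+n_2)/2$. Thus $N_\gamma(\lambda)\sim2\lambda^2$ and $\lambda_k(\gamma^2_{1/2})\sim\sqrt{k/2}$. Dividing, the ratio is of order $(\log k)^{-1/4}\to0$.

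The main obstacle is Step 2: making the Weyl/Bohr--Sommerfeld asymptotics rigorous when the potential has non-polynomial (logarithmically corrected) growth and the metric degenerates at infinity. My first attempt is bracketing with $\lambda$-dependent cell sizes, chosen so that oscillation of $V$ on each cell is $o(\lambda)$. The classically forbidden region $\{V>2\lambda\}$ is discarded via the lower bound $-\partial_s^2+V\ge V$. The leftover error terms then have to be shown to be $o(\lambda^2\sqrt{\log\lambda})$.
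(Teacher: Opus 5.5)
Your skeleton matches the paper's: conjugate to $H=-\Delta_g+\mathcal V$, apply a Weyl law $N(\lambda)\sim\frac{1}{4\pi}\int(\lambda-\mathcal V)_+\,d\mathrm{Vol}_g$, invert, and compare with Ornstein--Uhlenbeck. Your Step 3 is correct. The gap is in the step meant to produce the $\sqrt{\log\lambda}$, which rests on a structural guess that cannot work.

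\textbf{The separability guess.} The space of Theorem~\ref{thm:nocontract} is not separable. It comes from the radial measure $\nu\propto e^{-(\sqrt{1+\|y\|^2}-1)}dy$, so $W(x)=w(\|x\|)$ and $g=\nabla^2W$ is rotationally invariant. Also $\mu=e^{-W}dx$ itself, so your operator $L$ with $\vphi=W$ is already the right one and no isometry is needed. More importantly, no separable construction could satisfy the statement. If $W=\psi(x_1)+\psi(x_2)$, then $(\RR^2,\nabla^2W,e^{-W}dx)$ is a Riemannian product of two one-dimensional $\mathrm{CD}(\frac12,\infty)$ spaces. In arclength each factor is a $\frac12$-log-concave measure on an interval, hence a $1$-Lipschitz image of $\gamma^1_{1/2}$. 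The product map is then a contraction from $\gamma^2_{1/2}$. By \eqref{eq:eigenvaluecomparision} with $L=1$ you would get $\lambda_k(\RR^2,g,\mu)\geq\lambda_k(\RR^2,\|\cdot\|,\gamma^2_{1/2})$, so the ratio could not tend to $0$. For the same reason, your Step 1 expectation of a logarithmically corrected one-dimensional potential with a super-Gaussian count is impossible: one-dimensional $\mathrm{CD}(\frac12,\infty)$ counts are bounded by the Gaussian count $\sim 2\nu$.

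\textbf{Where the enhancement actually comes from.} It is intrinsically two-dimensional and comes from the volume, not the potential. In coordinates $(s,\theta)$ with $s=w'(r)$, the metric is $g=\frac{p(s)}{s}ds^2+sq(s)\,d\theta^2$, where $q=(w')^{-1}$ and $p=sq'$. Lemmas~\ref{lem : p(s)} and~\ref{lem : q(s)} give $p(s)\to1$ and $q(s)=(1+o(1))\log s$; this is where the unbounded support of $\mu$ enters. Hence $d\mathrm{Vol}_g=\sqrt{p(s)q(s)}\,ds\,d\theta\sim\sqrt{\log s}\,ds\,d\theta$. Meanwhile $\|\nabla_gF\|_g^2=(1+o(1))s$ and $\Delta_gF=o(s)$ give $\mathcal V=(1+o(1))\frac s4$. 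In geodesic distance $\rho\sim2\sqrt s$ this is $\rho^2/16$, the same potential as for $\gamma^2_{1/2}$. The gain is that the circumference $2\pi\sqrt{sq(s)}$ outgrows the flat one, and
$N(\lambda)\sim\frac12\int_0^{4\lambda}(\lambda-\frac s4)\sqrt{\log s}\,ds\sim\lambda^2\sqrt{\log\lambda}$.
None of this appears in your proposal, and the phase-space integral cannot be evaluated without it.

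\textbf{The Weyl law and the constant.} The two-dimensional non-compact Weyl law is not automatic. The paper gets it from \cite[Theorem 1.10]{dai2025weyl} after checking:
completeness; bounds on the curvature and its covariant derivative; an injectivity radius bound; and divergence, doubling and regularity of $\mathcal V$.
These checks require the refined expansion $p(s)=1-\frac2s+\frac1{sq(s)}+O(s^{-2})$ with $p',p''=O(s^{-2})$. A bracketing argument would need equivalent control. Note also that the metric does not degenerate at infinity; it has bounded geometry. Finally, your normalization is off. $N(\lambda)\sim c\lambda^2\sqrt{\log\lambda}$ inverts to $\lambda_k^2\sim\sqrt2\,k/(c\sqrt{\log k})$, so the constant $\sqrt[4]{2}$ corresponds to $c=1$. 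Your $c=1/\sqrt2$ would give $\sqrt2$ instead.
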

	In proving Theorem \ref{thm:nocomparison} we prove a Weyl-type asymptotic eigenvalue law for the weighted manifold. The estimates we derive in the proof of Theorem \ref{thm:nocontract} allow us to control the growth of the eigenvalue counting function for $-\Delta_{g,\mu}$, which in turn translates into bounds on $\lambda_k(\RR^2,g,\mu)$. We then compare this with
	the Gaussian case, where the spectrum is explicit and
	$
	\lambda_k(\RR^2,\|\cdot\|,\gamma_{1/2}^2)=\Theta(\sqrt{k}).
	$
	
	Finally, we mention that, by using \eqref{eq:eigenvaluecomparision}, Theorem \ref{thm:nocomparison} implies Theorem \ref{thm:nocontract}. We nevertheless choose to prove Theorem \ref{thm:nocontract} directly, since our argument is more geometric and identifies explicit barriers to the existence of Lipschitz transport maps. As we explained above, this contrasts with many approaches in the literature, where the non-existence of Lipschitz transports is deduced from the failure of functional inequalities such as \eqref{eq:eigenvaluecomparision}.

	\subsection{Ideas of the proof} \label{sec:ideas}
	\begin{figure}[h] 
		\caption{Moment measures}\label{fig:Diagram}
		\begin{tikzpicture}[
			>=Stealth,
			every node/.style={align=center},
			box/.style={
				draw,
				rounded corners=6pt,
				line width=0.65pt,
				inner xsep=14pt,
				inner ysep=11pt,
				font=\Large,
				minimum height=1.75cm
			},
			arrow/.style={->, line width=0.75pt, shorten >=2pt, shorten <=2pt},
			lab/.style={font=\Large, fill=white, inner sep=2pt}
			]
			
			\node[box, minimum width=5.2cm] (mu) at (-5.0,4.1)
			{
				$(\mathbb{R}^d,\nabla^2 W,\mu)$\\[7pt]
				$\mu=e^{-W(x)}\,dx$
			};
			
			\node[box, minimum width=5.9cm] (nu) at (5.0,4.1)
			{
				$(\mathbb{R}^d,\nabla^2 W^*,\nu)$\\[7pt]
				$\nu=(\nabla W)_\#\mu=e^{-V(x)}\,dx$
			};
			
			\node[box, minimum width=6.2cm] (gauss) at (0,0)
			{
				$(\mathbb{R}^d,\|\cdot\|,\gamma)$\\[7pt]
				$\displaystyle \gamma=\frac{1}{(2\pi)^{d/2}}e^{-\|x\|^2/2}\,dx$
			};
			
			\draw[arrow] (gauss.north west) -- node[lab, left=7pt, pos=.50] {$T$} (mu.south);
			\draw[arrow] (gauss.north east) -- node[lab, right=7pt, pos=.50] {$S$} (nu.south);
			\draw[arrow] (mu.east) -- node[lab, above=5pt] {$\nabla W$} (nu.west);
			
		\end{tikzpicture}.
	\end{figure}
	As mentioned above, our counterexample in Theorem \ref{thm:nocontract} is
	based on the moment-measure construction. To describe it, let
	$\nu=e^{-V(y)}dy$ be a probability measure on $\mathbb{R}^d$. We say that
	$\nu$ is the moment measure of $\mu$ if
	$\mu=e^{-W(x)}dx$ for some convex function $W$ and
	$$
	(\nabla W)_\#\mu=\nu.
	$$
	Equivalently, by Brenier's theorem \cite{brenier1991polar}, $\nabla W$ is the optimal transport map from $\mu$ to $\nu$.   Under mild assumptions on $\nu$,
	which will be satisfied in our construction, such a measure $\mu$ exists and is unique up to the natural symmetries, see \cite{cordero2015moment}. 
	
	This construction naturally gives rise to two Hessian Riemannian manifolds. Let $W^*$ be the Legendre transform of $W$. Then $\nabla W$ and $\nabla W^*$
	are inverse maps, and $\nabla W^*$ is the optimal transport map from $\nu$ to
	$\mu$. We equip the two spaces with the Hessian metrics induced by this pair of convex functions
	$$
	(\mathbb{R}^d,\nabla^2 W,\mu)
	\qquad\text{and}\qquad
	(\mathbb{R}^d,\nabla^2 W^*,\nu).
	$$
	Differentiating the identity $\nabla W(\nabla W^*(x))=x$ gives
	$\nabla^2 W(\nabla W^*(x))\,\nabla^2 W^*(x)=\mathrm{I}_d,$
	or equivalently
	$$
	\nabla^2 W^*(x)
	=
	\left(\nabla^2 W(\nabla W^*(x))\right)^{-1}.
	$$
	It follows that $\nabla W$ is an isometry from
	$(\mathbb{R}^d,\nabla^2 W,\mu)$ to
	$(\mathbb{R}^d,\nabla^2 W^*,\nu)$, that pushes forward $\mu$ to $\nu$.
	In particular, if there exists a contraction
	$$
	T:(\mathbb{R}^d,\|\cdot\|,\gamma_\kappa^d)
	\to
	(\mathbb{R}^d,\nabla^2 W,\mu),
	$$
	then $S:=\nabla W\circ T$
	is a contraction from
	$(\mathbb{R}^d,\|\cdot\|,\gamma_\kappa^d)$ to
	$(\mathbb{R}^d,\nabla^2 W^*,\nu)$. Thus it is useful to study the two Hessian
	manifolds in parallel. In the construction below, the measure $\nu$ will be chosen explicitly, while $\mu$ is obtained from $\nu$ through the moment-measure construction.

	The curvature-dimension properties of these spaces were studied in \cite{kolesnikov2014hessian} (see also \cite{klartag2014logarithm} and \cite{klartag2017remarks}). The case most relevant to us is when $V$ is convex,
	so that $\nu=e^{-V}dx$ is log-concave. In this setting,
	\cite[Theorem 4.3]{kolesnikov2014hessian} implies that
	$(\mathbb{R}^d,\nabla^2 W,\mu)$ satisfies
	$\mathrm{CD}(1/2,\infty)$, and the same is also true for the isometric space $(\mathbb{R}^d,\nabla^2 W^*,\nu)$.
	
	Our main idea lies in choosing $\nu$ to be a radially symmetric log-concave measure. That is, if $r := \|x\|$ and $\nu = e^{-V(x)}dx$, then $V(x) = v(r)$. By uniqueness in the moment-measure construction, this symmetry is inherited by $\mu$, and $W(x) = w(r)$ as well. Let us focus for now on the \emph{optimal transport maps} from some Gaussian $\gamma_\kappa^d$ to both measures, i.e $T_\#\gamma_\kappa^d = \mu$ and $S_\#\gamma_\kappa^d= \nu$, where both $T$ and $S$ minimize the quadratic. Since $\gamma_\kappa^d$, $\mu$, and $\nu$ are all radially symmetric, this is also true for the optimal transport maps. Thus, we may express them in the form
	$$
	T(x)=t(r)\frac{x}{r},\quad
	S(x)=s(r)\frac{x}{r}, \quad \text{ and }\nabla W(x) = w'(r)\frac{x}{r}.$$
	Moreover, by Brenier's theorem \cite{brenier1991polar}, both $T$ and $S$ are gradients of convex functions. The radial structure also makes these maps compatible with $W$. Indeed, 
	since $t$ and $w'$ are nondecreasing, the map $\nabla W\circ T$ is again the gradient of a convex radial function which also transports $\gamma_\kappa^d$ to $\nu$.
	By uniqueness of the Brenier map from $\gamma_\kappa^d$ to $\nu$, we therefore obtain the identity
	$S=\nabla W\circ T.$
	This setting is summarized in the commutative diagram of Figure \ref{fig:Diagram}.
	
	Now, let us study the Lipschitz properties of $S$ (equivalently of $T$). It is standard to show that as a map from $(\RR^d, \|\cdot\|) \to (\RR^d,\nabla^2 W)$, $T$ is $L$-Lipschitz if and only if, for almost any $x$,
	\begin{equation} \label{eq:riemlip}
		\nabla T(x)^{\top} \cdot \nabla^2 W(T(x)) \cdot \nabla T(x)  \preceq L^2\mathrm{I}_d.
	\end{equation}
	Using the diagram in Figure \ref{fig:Diagram}, together with the
	radial symmetry of all the maps involved, we can compute the eigenvalues of the left-hand side of \eqref{eq:riemlip} explicitly. When $r = \|x\|$, there is one radial eigenvalue given by $t'(r)^2w''(t(r))=t'(r)s'(r)$, and a tangential eigenvalue with multiplicity $d-1$ given by
	$\frac{t(r)s(r)}{r^2}$. The radial eigenvalue is the same quantity that appears in the one-dimensional case, and one can show it is uniformly bounded. In contrast, with a careful choice of $\nu$, we will show that the tangential  eigenvalue can be unbounded, i.e.
	$$\lim\limits_{r\to \infty}\frac{t(r)s(r)}{r^2}=\infty,$$
	ruling out the possibility of $T$ being Lipschitz.
	
	Since we are constraining $\nu$ to be log-concave, it necessarily has at most exponential tails. Compared with the Gaussian tails of $\gamma_\kappa^d$, this suggests that we can choose $\nu$ in a way that forces $s(r)$ to have quadratic growth, i.e $s(r) \geq c\cdot r^2$ for some constant $c$. Indeed, by \cite{colombo2021bounds,fathi2024growth} this is the worst possible growth of $s$. For such a measure we then have
	$$\lim\limits_{r\to \infty}\frac{t(r)s(r)}{r^2}\geq c\cdot\lim\limits_{r\to \infty}t(r)= \frac{c}{2}\mathrm{diameter}\left(\mathrm{support}(\mu)\right).$$
	The equality follows since $T(x) = t(r)\frac{x}{r}$ transports $\gamma_\kappa^d$ unto $\mu$, and so  $t(r)$ must reach the radius of the support of $\mu$. In particular, if $\mathrm{support}(\mu) =  \RR^d$ has infinite diameter, then the tangential eigenvalue is unbounded, and the optimal map is not Lipschitz. Thus, we are looking for a log-concave measure $\nu$, which has \emph{strictly} exponential tails, and which is the moment measure of $\mu$ which has \emph{full support}. 
	
	One possible approach would be to start with a log-concave $\nu = e^{-V(x)}dx$ and choose $V(x)$ so that $V(x) \simeq \|x\|$ when $\|x\|$ is large enough. The technical problem is that we would now need to show that it is the moment measure of $\mu$ and $\mu$ has unbounded support. This would require us to indirectly reason about $\mu$, since explicit solutions to the moment measure problem are notoriously hard to find. In the proof below we will take this route. Here, instead, for the sake of intuition, we draw inspiration from known one-dimensional examples, which are easier to compute in general. Thus, when $d=1$ consider $\nu \propto e^{-|x|}dx$ the Laplace distribution. It is straightforward to verify that it is the moment measure of $\mu \propto e^{-W(x)}dx$, where $W(x) = e^{|x|}-|x|$. Thus $\nu$ has exponential tails and $\mu$ has unbounded support.
	
	In light of the previous discussion, to get a counterexample for the optimal transport map, we need an analog of the one-dimensional example to higher dimensions, when $d \geq 2$. A natural extension to try is $W(x)=d(e^{\|x\|}-\|x\|)$. A computation now shows that if $\mu \propto e^{-W(x)}dx$ and $\nu = \nabla W_{\#}\mu \propto e^{-V(x)}dx$, then 
	$$V(x) = \|x\| -(d-1)\log\left(\frac{d+\|x\|}{\|x\|}\log\left(1+\|x\|/d\right)\right).$$
	Remarkably $V$ remains convex for every $d \in \mathbb{N}$, and clearly it has linear growth. Since $\mu$ has unbounded support, as we explained above, this shows that the optimal transport map from $\gamma$ to $\mu$ cannot be Lipschitz.
	
	
	To adapt these ideas into Theorem \ref{thm:nocontract}, there are two main hurdles to tackle:
	\begin{enumerate}
		\item The densities of $\mu$ and $\nu$ are not smooth, and have a singularity at the origin. Consequently, the same is true for the metrics.
		\item The result only holds for the optimal transport map, and we need to extend it to every transport map.
	\end{enumerate}
	The proof of Theorem \ref{thm:nocontract} will exactly handle those issues. Instead of working with $\nu$ as above, we will work with a smoothed version of the radial Laplace distribution $e^{-\|x\|}dx$. Then we adapt the geometric arguments above to apply for any transport map.

	\subsection{Concurrent and related work}
	A short time before the completion of this work, we became aware of the concurrent work of Aryan \cite{aryan2026obstructions}, which also constructs
	counterexamples to Milman's conjectures. In the setting relevant to \eqref{eq:eigenvaluecomparision}, Aryan constructs, in dimensions $d\geq 4$, a weighted Riemannian manifold $(\RR^d, g,\mu)$ for which \eqref{eq:eigenvaluecomparision} fails when $k = d + 2$ and $L = 1$.
	As we previously explained, this spectral failure also rules out the existence of a contractive transport map, in the sense of \eqref{eq:contraction}.
	
	The two works can be seen as complementary. Aryan's
	construction starts from the spectral side of Milman's question, and proceeds by constructing a space satisfying the curvature-dimension condition for which the eigenvalue comparison in \eqref{eq:eigenvaluecomparision} fails. The main idea in \cite{aryan2026obstructions} is to construct a weighted
	Riemannian manifold whose end becomes effectively cylindrical, which makes it possible to construct several independent test functions with small Rayleigh quotients. With this in mind, the obstruction to transport in \cite{aryan2026obstructions} is then deduced indirectly, from the failure of the eigenvalue comparison. In the present work, the logic is reversed. Our construction is designed from the transport side, using the moment-measure construction and its Hessian geometry. We directly rule out Lipschitz transport maps by showing that the metric growth in our manifold is incompatible with the tails of the source and target measures. The spectral counterexample is then obtained as a consequence of this geometric obstruction. 
	
	Let us point out some further differences between the constructions. First, Theorems \ref{thm:nocontract} and \ref{thm:nocomparison} already apply in dimension $2$. Moreover, Theorem \ref{thm:nocontract} rules out the existence of any Lipschitz transport map, while Theorem \ref{thm:nocomparison} gives an asymptotic failure of the spectral comparison. In contrast \cite{aryan2026obstructions} requires $d\geq 4$ and exhibits a violation of the Gaussian comparison at the fixed finite index $k=d+2$. For transport maps, this then rules out the existence of a contraction, but does not by itself exclude Lipschitz transport maps with larger constants. Indeed, studying the proof of \cite{aryan2026obstructions} suggests that one can construct a transport map which is $L$-Lipschitz for some finite $L$, in which case there is a weakened comparison inequality and \eqref{eq:eigenvaluecomparision} holds with this $L$. On the other hand, the example of \cite{aryan2026obstructions} has an additional geometric feature which our construction does not have, the
	underlying Riemannian metric has non-negative Ricci curvature. Thus, this example also addresses a possible strengthening of Milman's conjecture, see the
	discussion in \cite[Section 6]{milman2018spectral}. As we will show, the Ricci curvature in our example can be negative, see Remark \ref{rmk:curvature}.
	
	In another direction we mention that \cite{aryan2026obstructions} also rules out the spherical version of Milman's question,  \cite[Conjectures 3* and 4*]{milman2018spectral} as well. This should be contrasted with the recent positive results of
	\cite{serres2026contractive,ge2025generalization}, which study contractions from the round sphere to small perturbations of spheres of smaller radius. In particular, these works show that Milman's question has a positive answer for sufficiently small perturbations. However, \cite[Theorem 1]{aryan2026obstructions} shows that such positive results cannot be extended to arbitrary positively curved metrics on the sphere.
	
	Finally we comment on the logarithmic estimate in Theorem \ref{thm:nocomparison} in the context of non-classical
	Weyl laws. Our proof proceeds by computing the
	leading order asymptotics of the eigenvalue counting function, inspired by the technique in \cite[Section 2.3]{milman2018spectral}.This is somewhat reminiscent of
	classical examples such as \cite{simon1983nonclassical, vandenberg1984horn}, where
	logarithmic terms also appear in the principal eigenvalue-counting asymptotics. A recent
	geometric perspective is given by
	\cite{chitour2024weyl}, who construct singular Riemannian structures with prescribed slowly varying factors in Weyl's law, including logarithms.
	
	\paragraph{Acknowledgments:} W.D. was supported by a NSF Graduate Research Fellowship under Grant No. DGE-2140004. He is grateful to Pablo L\'opez Rivera, Jordan Serres, Katharina Eichinger, Max Fathi, Emanuel Milman for helpful conversations, and to Kunal Chawla for pointing out the use of Caffarelli's theorem in the recent resolution of Talagrand's creating convexity conjecture.  D.M. was partially supported by the Brian and Tiffinie Pang Faculty Fellowship. He is grateful to Shrey Aryan for helpful conversations on contractions in Riemannian settings, and to Stefan Steinerberger for enlightening discussions on Weyl's law, as well as to ChatGPT for providing useful references for Proposition \ref{prop:weyl}. 
	\section{Preliminaries}
	\paragraph{Moment measures:}
	Here we give the necessary background on moment measures and their associated Hessian manifolds. Recall that for a given probability measure $\nu = e^{-V(x)}dx$ on $\RR^d$, we are looking for another probability measure $\mu = e^{-W(x)}dx$, such $W$ is convex and $\nabla W_\#\mu = \nu.$
	
	We begin with the necessary regularity assumptions $\nu$ should satisfy to ensure the existence of $\mu$. Here the main definition is that of \emph{essential continuity}. We say that the function $V$ is essentially continuous if it is lower semi-continuous, and the set of points where $V$ is discontinuous has zero $\mathcal{H}^{d-1}$-measure where $\mathcal{H}^{d-1}$ is the $(d-1)$-dimensional Hausdorff measure. In particular, if $V$ is smooth, then it is essentially continuous. The main result of \cite{cordero2015moment} now states that when $V$ is essentially continuous, then $\mu$ is well defined.
	\begin{theorem}[\textnormal{\cite[Theorem 2]{cordero2015moment}}] \label{thm:momentexist}
		Let $\nu = e^{-V(x)}dx$ be a centered probability measure on $\RR^d$, and assume that $V$ is essentially continuous. Then, there exists a convex function $W:\RR^d \to \RR\cup \{\infty\}$ which is essentially continuous such that $\mu = e^{-W(x)}dx$ is a centered probability measure and $\nabla W_\#\mu = \nu$. Moreover, $\mu$ is the unique centered measure with this property.
	\end{theorem}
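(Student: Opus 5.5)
This statement is quoted from \cite[Theorem 2]{cordero2015moment}, so in the paper it is invoked rather than proved. If I had to reconstruct the argument, I would follow the variational approach of Cordero-Erausquin and Klartag. The idea is to find $W$ as the convex function $\psi=W$ maximizing
\[
\mathcal{J}(\psi)=\log\int_{\RR^d} e^{-\psi^*}\,dx-\int_{\RR^d}\psi\,d\nu ,
\]
or, equivalently, minimizing a functional of the form $\psi\mapsto \int \psi\, d\nu + \log\int e^{-\psi^*}$ in the dual variable. The Euler--Lagrange equation of this problem is exactly the moment-measure equation $\nabla W_\#\mu=\nu$ with $\mu\propto e^{-W}$.

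\textbf{Step 1: first variation.} For convex $\psi$ and a bounded continuous $h$, set $\psi_t=(\psi^*+th)^*$. The derivative of $\int\psi_t\,d\nu$ is $-\int h(\nabla\psi)\,d\nu$. Because $\nu$ is absolutely continuous, $\nabla\psi$ exists $\nu$-a.e., so this is well defined. The derivative of $\log\int e^{-(\psi^*+th)}$ is $-\int h\,d\mu/\int d\mu$. Hence at a critical point, $(\nabla\psi)_\#\nu=\mu$ normalized. Dualizing gives $(\nabla\psi^*)_\#\mu=\nu$, and $W=\psi^*$ is the desired function.

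\textbf{Step 2: existence of a maximizer.} This is where the centering is used. The functional is invariant under $\psi\mapsto\psi+c$, and because $\nu$ is centered it is also invariant under $\psi\mapsto\psi+\langle a,x\rangle$. I would normalize the translation and the additive constant and then prove coercivity. The key estimate is that a centered measure with a density charges every half-space $\{\langle x,\theta\rangle\ge0\}$ with mass at least some $c>0$, uniformly in $\theta$ by compactness. This controls $\int\psi\,d\nu$ from below by the size of $\psi$ on a ball. A Prékopa/Brunn--Minkowski-type bound on $\int e^{-\psi^*}$ then yields uniform local bounds. From there, local compactness of convex functions together with lower semicontinuity (Fatou) gives a maximizer.

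\textbf{Step 3: regularity and justification.} Essential continuity of $V$ enters at two points: it makes the maximizer $W$ essentially continuous, and it justifies the variation in Step 1. The difficulty is that $\psi$ may be $+\infty$ outside a convex set $K$. One must show that $\nu$ does not charge $\partial K$ in a way that spoils the Euler--Lagrange equation, which requires the condition $\mathcal H^{d-1}(\text{discontinuities})=0$. I expect Steps 2 and 3 to be the main obstacles, since the boundary behaviour is the delicate part.

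\textbf{Step 4: uniqueness.} Here I would use displacement convexity, in the form of Prékopa's inequality along the interpolation $(1-t)\psi_0^*+t\psi_1^*$. This shows the dual functional is strictly concave modulo translations. Consequently two centered solutions must coincide, and $\mu$ is unique.
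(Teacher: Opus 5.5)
The paper gives no proof of this statement; it quotes Cordero-Erausquin--Klartag \cite{cordero2015moment}. Your outline follows their variational route in broad strokes: the functional $\log\int e^{-\psi^*}-\int\psi\,d\nu$, the variation $(\psi^*+th)^*$, coercivity from centering, and Pr\'ekopa--Leindler for uniqueness. There is, however, a genuine gap at the point on which both halves of the statement rest, namely the essential continuity of $W$.

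\textbf{Where essential continuity comes from.} Step 3 attributes it to the regularity of $V$. In \cite{cordero2015moment} nothing is assumed on $\nu$ beyond finite first moment, barycenter $0$, and not lying in a hyperplane; no density is needed. Essential continuity is a \emph{conclusion} about $W$. Also, the relevant question is not whether $\nu$ charges $\partial K$, since $\partial K$ lives on the $\mu$ side; it is whether $e^{-W}$ vanishes there.

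Step 1 cannot produce it either. A bounded $h$ never moves the boundary of $K=\{W<\infty\}$, and your Euler--Lagrange equation is satisfied by solutions that are not essentially continuous. Take $d=1$ and $\nu$ the standard Gaussian, so $V$ is smooth. For $\kappa>0$, let $W$ be the even solution of $W''=1+\sqrt{2\pi}\,\kappa\,e^{(W')^2/2}$ with $e^{-W(0)}=\kappa+(2\pi)^{-1/2}$. Then $e^{-W}=\kappa+(2\pi)^{-1/2}e^{-(W')^2/2}$, hence $e^{-W}=(2\pi)^{-1/2}e^{-(W')^2/2}W''$ and $(W')_\#(e^{-W}dx)=\nu$. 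Yet $W'$ blows up at some finite $\pm x_\kappa$, where $W\to-\log\kappa<\infty$.

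What is missing is a variation that moves $\partial K$, combined with centering. Let $\psi$ be the maximizer and $W=\psi^*$ with $\int e^{-W}=1$. Since $(\psi(\lambda\,\cdot))^*=W(\cdot/\lambda)$, the map $\lambda\mapsto d\log\lambda-\int\psi(\lambda y)\,d\nu(y)$ is maximal at $\lambda=1$. After justifying the differentiation, this gives $\int\langle x,\nabla W\rangle e^{-W}dx=d$. Integrating by parts, the left side equals $d-\int_{\partial K}\langle x,n_K\rangle e^{-W}\,d\mathcal H^{d-1}$. Translate so that $0\in\mathrm{int}\,K$, which is allowed because $\nu$ is centered. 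Then $\langle x,n_K\rangle>0$ on $\partial K$, which forces $e^{-W}=0$ $\mathcal H^{d-1}$-a.e.\ on $\partial K$.

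\textbf{Uniqueness.} The same example shows that the conclusion of Step 4, that two centered solutions coincide, is false as stated. Strict concavity only gives uniqueness of the \emph{maximizer} up to translation. You must also show that every essentially continuous solution $W$ makes $\psi=W^*$ a maximizer, i.e.\ that $\frac{d}{dt}\big|_{t=0^+}\log\int e^{-\psi_t^*}\le\int(\tilde\psi-\psi)\,d\nu$ along $\psi_t=(1-t)\psi+t\tilde\psi$. The domain of $\psi_t^*$ grows beyond $K$, and the extra mass is $o(t)$ only because $e^{-W}$ vanishes on $\partial K$. This is exactly where essential continuity enters uniqueness.

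\textbf{Smaller points.} Your conventions conflict: the opening line sets $\psi=W$, while Step 1 sets $W=\psi^*$. With the Step 1 convention, Pr\'ekopa--Leindler gives concavity along $(1-t)\psi_0+t\psi_1$, whose Legendre transform is the scaled infimal convolution of $W_0$ and $W_1$. Along $(1-t)\psi_0^*+t\psi_1^*=(1-t)W_0+tW_1$, the term $\log\int e^{-W}$ is convex by H\"older, which is the wrong sign.

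Step 2 needs upper semicontinuity of the functional. Fatou handles $-\int\psi\,d\nu$, but $\limsup_k\int e^{-\psi_k^*}\le\int e^{-\psi^*}$ requires domination. That domination comes from your uniform bounds $\psi_k\le M$ on a ball $B_r$, via $\psi_k^*(x)\ge r|x|-M$.
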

	
	Next, for a convex function $W$, we present a sufficient condition for the weighted Riemannian manifold $(\RR^d, \nabla^2 W, e^{-W(x)}dx)$ to satisfy the curvature-dimension condition. We follow the presentation of \cite{kolesnikov2014hessian} by introducing transportation metric.
	Let $\mu$ and $\nu$ be two probability measures on $\RR^d$. The optimal transport map aims to find a map $T:\RR^d \to \RR^d$ such that $T_\#\mu = \nu$ which minimizes the squared distance cost (the reader is referred to \cite{villani2003topics} for more details on optimal transport). Brenier's theorem \cite{brenier1991polar} states that under appropriate regularity assumptions such a $T$ exists and moreover $T = \nabla \Phi$, for some convex function $\Phi$, and $T$ is the unique transport map with this property. Now, let $\Psi = \Phi^*$ be the Legendre transform of $\Phi$. As explained in Section \ref{sec:ideas} this leads to two isometric weighted Riemannian manifolds
	$$(\RR^d, \nabla^2 \Phi,\mu) \qquad\text{and}\qquad (\RR^d, \nabla^2 \Psi,\nu),$$
	with $\nabla \Phi$ the isometry between the spaces.
	When $\mu = e^{-W(x)}dx$ and $\nu = e^{-V(x)}dx$, \cite[Theorem 4.3]{kolesnikov2014hessian} gives a sufficient condition for these spaces to satisfy the curvature dimension condition. According to this result, if for every $x \in \RR^d$,
	\begin{equation} \label{eq:kolesnikovsufficnet}
		(\nabla^2\Phi(x))^{-\frac{1}{2}}\cdot \nabla^2W(x)\cdot	(\nabla^2\Phi(x))^{-\frac{1}{2}} + 	(\nabla^2\Phi(x))^{\frac{1}{2}}\cdot \nabla^2V(\nabla \Phi(x))\cdot	(\nabla^2\Phi(x))^{\frac{1}{2}} \succeq 2c\mathrm{I}_d,
	\end{equation}
	then $\mathrm{Ric}_{\nabla^2\Phi, \mu} \succeq c\nabla^2\Phi$. Equivalently
	$(\RR^d, \nabla^2\Phi, \mu)$ satisfies $\mathrm{CD}(c,\infty).$ Using this condition we spell out the curvature-dimension condition when $\Phi = W$.
	\begin{prop} \label{prop:hessianCD}
		Let $\mu = e^{-W(x)}dx$ and $\nu = e^{-V(x)}dx$ be two probability measures on $\RR^d$ with both $W$ and $V$ smooth and convex. Suppose that $\nabla W_\#\mu = \nu$. Then, $(\RR^d, \nabla^2W, \mu)$ satisfies $\mathrm{CD}(\frac{1}{2},\infty).$
	\end{prop}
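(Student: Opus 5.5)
We apply the sufficient condition \eqref{eq:kolesnikovsufficnet} with $\Phi = W$ and $c=\frac{1}{2}$. Here $\nabla W$ is the Brenier map from $\mu$ to $\nu$, since it is the gradient of a convex function pushing $\mu$ to $\nu$. It therefore suffices to show that for every $x\in\RR^d$,
$$
A^{-\frac12}\,\nabla^2 W(x)\,A^{-\frac12} + A^{\frac12}\,\nabla^2 V(\nabla W(x))\,A^{\frac12} \succeq \mathrm{I}_d, \qquad A:=\nabla^2 W(x).
$$
The key observation is that the first term collapses. Since $W$ is smooth and convex, $A$ is symmetric positive semi-definite. Wherever $A$ is positive definite, $A^{-\frac12} A A^{-\frac12} = \mathrm{I}_d$ exactly. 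So the first summand alone already equals $\mathrm{I}_d = 2c\,\mathrm{I}_d$ with $c=\frac12$.

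The second summand is harmless. Because $V$ is convex, $\nabla^2 V(\nabla W(x)) \succeq 0$. Conjugating by the symmetric matrix $A^{\frac12}$ preserves positive semi-definiteness: $\langle A^{\frac12}\nabla^2 V A^{\frac12}\xi,\xi\rangle = \langle \nabla^2 V\,\eta,\eta\rangle \geq 0$ with $\eta = A^{\frac12}\xi$. Adding the two terms gives the required inequality $\succeq \mathrm{I}_d$. By \cite[Theorem 4.3]{kolesnikov2014hessian}, this yields $\mathrm{Ric}_{\nabla^2 W,\mu}\succeq \frac12 \nabla^2 W$, which is $\mathrm{CD}(\frac12,\infty)$.

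The only real obstacle is ensuring that $\nabla^2 W$ is positive definite everywhere. This is needed both for $A^{-\frac12}$ to make sense and for $\nabla^2 W$ to be a genuine Riemannian metric. Smooth convexity only gives $A\succeq 0$, so I would argue nondegeneracy through the transport relation. Since $\nabla W_\#\mu=\nu$ with both densities smooth and positive, the change of variables (Monge–Amp\`ere) equation
$$
e^{-W(x)} = e^{-V(\nabla W(x))}\det \nabla^2 W(x)
$$
holds wherever it is meaningful. I would first check that it holds pointwise on $\RR^d$. Granting that, $\det\nabla^2 W(x) = e^{V(\nabla W(x))-W(x)}>0$, so $A$ is positive definite at every point. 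If establishing the pointwise equation in full generality is delicate, the fallback is to note that the statement implicitly presupposes $\nabla^2 W$ is a Riemannian metric, that is, $\nabla^2 W\succ 0$; this is in any case verified directly for the explicit radial construction used later. With nondegeneracy in hand, the rest of the argument is the two-line matrix computation above.
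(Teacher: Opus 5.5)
Your proposal is correct and is the paper's proof: take $\Phi=W$ in \eqref{eq:kolesnikovsufficnet}, so the first term is exactly $\mathrm{I}_d$ and the second is positive semi-definite because $V$ is convex. Your extra check that $\nabla^2 W\succ 0$ is a point the paper leaves implicit, and it closes as you suggest: the Monge--Amp\`ere equation $e^{-W}=e^{-V(\nabla W)}\det\nabla^2 W$ holds almost everywhere, both sides are continuous when $W,V$ are smooth, so it holds everywhere and forces $\det\nabla^2 W>0$.
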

	\begin{proof}
		It is enough to verify \eqref{eq:kolesnikovsufficnet} with $c = \frac{1}{2}$. Indeed, since $V$ is convex
		$$(\nabla^2\Phi(x))^{\frac{1}{2}}\cdot \nabla^2 V(\nabla W(x))	(\nabla^2\Phi(x))^{\frac{1}{2}}  \succeq 0.$$
		Furthermore, substituting $\Phi$ for $W$ in \eqref{eq:kolesnikovsufficnet} we get
		$$(\nabla^2\Phi(x))^{-\frac{1}{2}}\cdot \nabla^2W(x)	\cdot (\nabla^2\Phi(x))^{-\frac{1}{2}} = \mathrm{I}_d.$$
		Combining the above two displays yields the desired bound.
	\end{proof}
	\paragraph{Contractions on Riemannian manifolds:}
	We record here one important and elementary fact which allows to express the Lipschitz constant of a map between Riemannian manifolds in terms of the Riemannian gradient.
	\begin{lem} \label{lem:liptograd}
		Let $T:\RR^d\to\RR^d$, be continuously differentiable and let $g$ be a Riemannian metric on $\RR^d$. Then the following conditions are equivalent:
		\begin{enumerate}
			\item $T$ is $L$-Lipschitz as map from $(\RR^d,\|\cdot\|)$ to $(\RR^d,g)$. I.e. , for every $x,y\in \RR^d$. $$d_g(T(x),T(y))\leq L\|x-y\|.$$
			\item For almost every $x$ in $\RR^d$, 
			$$\|\nabla T(x)^\top \cdot g(T(x)) \cdot\nabla T(x)\|_{\mathrm{op}} \leq L^2.$$
		\end{enumerate}
		Moreover if $T$ is not continuously differentiable, then the implication $(1) \implies (2)$ still holds.
	\end{lem}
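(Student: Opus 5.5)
The plan is to reduce both directions to the pointwise comparison between the Riemannian length of a vector and the operator norm of the pulled-back metric. For $x\in\RR^d$ and $v\in\RR^d$, the $g$-length of the image vector $\nabla T(x)v$ at $T(x)$ is
$$\|\nabla T(x)v\|_{g(T(x))}^2=\langle v,\nabla T(x)^\top g(T(x))\nabla T(x)\,v\rangle .$$
This is a positive semi-definite quadratic form in $v$. Hence condition (2) at $x$ is equivalent to $\|\nabla T(x)v\|_{g(T(x))}\le L\|v\|$ for all $v$.

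\emph{$(2)\Rightarrow(1)$ for $C^1$ maps.} Fix $x,y$ and consider the segment $\sigma(t)=x+t(y-x)$, $t\in[0,1]$.
\begin{enumerate}
\item By definition of $d_g$ as an infimum of lengths of curves,
$$d_g(T(x),T(y))\le\int_0^1\|\nabla T(\sigma(t))(y-x)\|_{g(T(\sigma(t)))}\,dt .$$
\item The integrand is bounded by $L\|y-x\|$ wherever (2) holds.
\item The difficulty is that (2) is only assumed almost everywhere, and a segment is a null set. Since $T$ is $C^1$ and $g$ is continuous, the map $x\mapsto\|\nabla T(x)^\top g(T(x))\nabla T(x)\|_{\mathrm{op}}$ is continuous. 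A continuous function bounded by $L^2$ on a full-measure (hence dense) set is bounded by $L^2$ everywhere, so (2) holds at every point of the segment.
\end{enumerate}
Integrating the bound from step 2 then gives (1).

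\emph{$(1)\Rightarrow(2)$ without the $C^1$ assumption.} This is the step needing the most care.
\begin{enumerate}
\item Since $g$ is continuous and positive definite, it is locally comparable to the Euclidean metric. Therefore (1) implies that $T$ is locally Lipschitz in the Euclidean sense.
\item By Rademacher's theorem, $T$ is differentiable at almost every $x$. Fix such an $x$ and a vector $v$.
\item Near $T(x)$, we have the Riemannian distance expansion
$$d_g(T(x),T(x+hv))=\|T(x+hv)-T(x)\|_{g(T(x))}+o(h),$$
obtained by comparing $d_g$ with the constant-coefficient norm $\|\cdot\|_{g(T(x))}$ on small balls using continuity of $g$. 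The comparison constants tend to $1$ as the radius shrinks.
\item Combining this with $T(x+hv)-T(x)=h\nabla T(x)v+o(h)$, dividing (1) by $h$ and letting $h\to0$ gives $\|\nabla T(x)v\|_{g(T(x))}\le L\|v\|$.
\end{enumerate}
By the quadratic-form identity above, this is exactly (2) at $x$, which finishes the proof.

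The main obstacle I expect is the expansion in step 3, namely showing that $d_g$ is infinitesimally the norm $\|\cdot\|_{g(p)}$ at a point $p$. I would prove it by a two-sided bound. For the upper bound, take a straight segment. For the lower bound, note that on a ball where $(1-\epsilon)g(p)\le g\le(1+\epsilon)g(p)$, any curve that leaves the ball has length too large to matter. For curves that stay in the ball, their $g$-length dominates $(1-\epsilon)^{1/2}$ times their $g(p)$-length.
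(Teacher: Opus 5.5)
Your proposal is correct and follows essentially the same route as the paper. For $(1)\Rightarrow(2)$ you use the first-order expansion $d_g(p,p+h)=\|h\|_{g(p)}+o(\|h\|)$ at points of differentiability given by Rademacher's theorem, and for $(2)\Rightarrow(1)$ you bound the $g$-length of the image of a curve; your continuity argument, upgrading the almost-everywhere bound to an everywhere bound along the segment (a null set), supplies a step the paper's length estimate uses without comment, and your sketch of the distance expansion justifies what the paper quotes as standard.
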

	\begin{proof}
		We first work under the assumption that $T$ is continuously differentiable.
		In that case, we begin by proving that the Lipschitz condition on $T$ implies the operator norm bound on $\nabla T(x)^\top \cdot g(T(x)) \cdot\nabla T(x)$. For $x, v \in \mathbb{R}^d$, $t \in \mathbb{R}$, set $x_t = x+tv$, so that $$d_g(T(x_t), T(x)) \leq L\| x_t-x\| = L\vert t \vert\|v\|.$$ Since $T$ is  $C^1$,
		$$T(x+tv) = T(x)+t\nabla T(x)v+o(t)$$ as $t \rightarrow 0$. We use the following standard first-order expansion of the Riemannian distance in local coordinates, where $p \in \mathbb{R}^d$ is fixed and $h \in \mathbb{R}^d$ is a small displacement vector: $$d_g(p, p+h) = \sqrt{h^\top g(p)h}+o(\| h\|)$$ as $h \rightarrow 0$ which is just the fact that 
		\begin{align*}
			\lim_{h \rightarrow0}\frac{d_g(p, p+h)}{\sqrt{h^\top g(p)h}} = 1.
		\end{align*}
		Thus, using $p = T(x)$ and $h = T(x+tv)-T(x) = t\nabla T(x)v+o(t)$, we have
		\begin{align*}
		 \vert t \vert \sqrt{v^\top \nabla T(x)^\top g(T(x))\nabla T(x)v}+o(\vert t\vert) =	d_g(T(x+tv), T(x)) 
			\leq L\vert t \vert \| v\|.
		\end{align*}
		Dividing by $\vert t \vert$ and sending $t \rightarrow 0$, we get
		\begin{align*}
			\sqrt{v^\top \nabla T(x)^\top g(T(x))\nabla T(x)v} \leq L\| v\|.
		\end{align*}
		
		Conversely, suppose that $\|\nabla T(x)^\top \cdot g(T(x)) \cdot\nabla T(x)\|_{\mathrm{op}} \leq L^2$ for almost all $x$. 
		For any $C^1$ curve $\gamma: [0, 1] \rightarrow \mathbb{R}^d$, its Riemannian length with respect to $g$ is defined by
		\begin{align*}
			\text{Length}_g(\gamma) := \int_0^1 \sqrt{\gamma'(t)^\top g(\gamma(t))\gamma'(t)}\;dt= \int_0^1 \| \gamma'(t)\|_{g, \gamma(t)}\;dt.
		\end{align*}
		Now note that 
		\begin{align*}
			\text{Length}_g(T \circ \gamma) &= \int_0^1 \| \nabla T({\gamma(t)})(\gamma'(t))\|_{g, T(\gamma(t))}\;dt \\
			&= \int_0^1 \sqrt{\gamma'(t)^\top \nabla T(\gamma(t))^\top g(T(\gamma(t)))\nabla T(\gamma(t))\gamma'(t)}\;dt\\
			&\leq \int_0^1\sqrt{L^2\| \gamma'(t)\|^2}\;dt\\
			&= \int_0^1 L \| \gamma'(t)\|\;dt = L \cdot \text{Length}_{\| \cdot \|}(\gamma),
		\end{align*}
		and since this holds for every $C^1$ curve $\gamma$ joining $x$ to $y$, we can take the infimum over all such curves, which yields exactly the Euclidean distance $\| x-y\|$, 
		\begin{align*}
			d_g(T(x), T(y)) &\leq L\inf_{\gamma} \text{Length}_{\| \cdot \|}(\gamma)
			= L\| x-y\|.
		\end{align*}
		
		To finish the proof, consider the case that $T$ is not continuously differentiable. Note that since $g$ is a smooth Riemannian metric, $d_g$ is locally comparable to the Euclidean metric, so the $L$-Lipschitz map $T$ is locally Euclidean Lipschitz, and hence by Rademacher's theorem $T$ being $L$-Lipschitz means that at every point $x \in \RR^2$ where $T$ is differentiable, 
		\begin{align*}
			\nabla T(x)^\top g(T(x))\nabla T(x) \leq L^2 \cdot \mathrm{I}_d
		\end{align*}
	\end{proof}
	\section{Obstructions to Lipschitz transport} \label{sec:nocontract}
	Following the ideas outlined in Section \ref{sec:ideas}, we restrict ourselves to $d=2$ and consider the smooth potential $V(x) = \sqrt{1 + \|x\|^2} - 1$, for which we define the log-concave measure $\nu = \frac{1}{4\pi}e^{-V(x)}dx$. Since $V$ is radial we will also write $V(x) = v(\|x\|)$, where $v:\RR_+\to \RR$ is convex and increasing. 
	\subsection{On the moment measure}
	We begin by studying the moment measure problem for $\nu$.
	By Theorem \ref{thm:momentexist}, there exists a centered probability measure $\mu = e^{-W(x)}dx$ such that $W$ is essentially continuous and convex and $\nabla W_{\#} \mu = \nu$. Moreover, since both $\mu$ is log-concave and $\nu$ has support over $\RR^2$, $W$ is also smooth on its support, see \cite[Theorem 1]{cordero2019regularity}\footnote{The theorem actually says that if both $W$ and $V$ are $k$ times continuously differentiable on their support then $\nabla W$ is $k+1$ times continuously differentiable. Since $W$ is continuous and since $V$ is smooth, we can induct on this claim to show $W$ is actually smooth on its support. }. We first observe that since $\nu$ is radially symmetric the same must be true for $\mu$ as well.
	\begin{lem} \label{lem:rotinvariance}
		Let $\mu$ be as above. There exists a convex and increasing function $w:\RR_+\to \RR \cup \{\infty\}$ such that $W(x) = w(\|x\|)$. Moreover, $w'$ is smooth and invertible on its support, and if we set 
		\begin{equation} \label{eq:qdef}
			q(s) = (w')^{-1}(s)
		\end{equation} to be the inverse function, then we have the identity
		\begin{equation} \label{eq:radialmongeampere}
			e^{-w(q(s))}q(s)q'(s) = \frac{1}{4\pi}e^{-v(s)}s.
		\end{equation}
	\end{lem}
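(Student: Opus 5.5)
First, radial symmetry from uniqueness. For any rotation $R\in O(2)$, set $W_R(x)=W(R^\top x)$ and $\mu_R=e^{-W_R}dx=R_\#\mu$. This function is still convex and essentially continuous, and $\mu_R$ is still centered. Since $\nabla W_R(x)=R\,\nabla W(R^\top x)$, we get $(\nabla W_R)_\#\mu_R=R_\#\big((\nabla W)_\#\mu\big)=R_\#\nu=\nu$, the last step because $V$ is radial. The uniqueness part of Theorem \ref{thm:momentexist} then gives $\mu_R=\mu$, so $W_R=W$ almost everywhere. Both are essentially continuous and convex, hence agree wherever finite; the values $+\infty$ are handled by the lower semicontinuity normalization. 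Therefore $W(x)=w(\|x\|)$ for some $w:\RR_+\to\RR\cup\{\infty\}$. Restricting $W$ to a line through the origin shows that $w$ extended evenly is convex. An even convex function is nondecreasing on $\RR_+$, so $w$ is convex and increasing.

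Next, smoothness and invertibility of $w'$. As noted before the lemma, \cite[Theorem 1]{cordero2019regularity} makes $W$ smooth on the interior of its support, so $w$ is smooth on the interior of its support in $(0,\infty)$. The map $\nabla W(x)=w'(\|x\|)x/\|x\|$ pushes $\mu$ onto $\nu$, and $\nu$ has full support with positive density. Hence $w'$ maps the support interval $[0,R_\mu)$ onto $[0,\infty)$: it is continuous, nondecreasing, and tends to $\infty$ at the endpoint, since otherwise the image of $\nabla W$ would be bounded. For strict monotonicity, note that the Monge--Amp\`ere equation $e^{-W}=e^{-V(\nabla W)}\det\nabla^2W$ holds classically by smoothness. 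Because the left side is positive, $\det\nabla^2W>0$. The Hessian of a radial function has eigenvalues $w''(r)$ (radial) and $w'(r)/r$ (tangential, away from the origin), so $w''>0$. Thus $w'$ is a smooth bijection $[0,R_\mu)\to[0,\infty)$, and by the inverse function theorem $q=(w')^{-1}$ is smooth with $q'=1/w''(q)$.

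Finally, the identity \eqref{eq:radialmongeampere}. Substituting the radial eigenvalues into the Monge--Amp\`ere equation gives
$$e^{-w(r)}=\tfrac{1}{4\pi}e^{-v(w'(r))}\,w''(r)\,\frac{w'(r)}{r}.$$
Now set $r=q(s)$, so that $w'(r)=s$ and $w''(r)=1/q'(s)$, and multiply through by $q(s)q'(s)$. This yields exactly $e^{-w(q(s))}q(s)q'(s)=\frac{1}{4\pi}e^{-v(s)}s$.

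As a cross-check that avoids pointwise Monge--Amp\`ere, one can compare the radial laws. Under $\mu$, the radius $r=\|x\|$ has density $2\pi r e^{-w(r)}$; under $\nu$, the radius $s$ has density $2\pi\cdot\frac{1}{4\pi}se^{-v(s)}$. The map $r\mapsto w'(r)$ is monotone and pushes one onto the other, and the change of variables $r=q(s)$ gives the same identity.

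The main obstacle I expect is the rigorous justification of the pointwise Monge--Amp\`ere equation together with $w''>0$. To keep this light, I would rely on the radial pushforward computation. Monotonicity of $w'$ plus the pushforward identity force the cumulative distribution functions to match, $\mu(\|x\|\le r)=\nu(\|y\|\le w'(r))$. Differentiating this relation, with smoothness of $w$ coming from \cite{cordero2019regularity}, gives $w''>0$ wherever $\mu$ has positive density. This holds on all of $[0,R_\mu)$, so $q$ is well defined and the identity follows.
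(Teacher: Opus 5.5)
Your proof is correct and follows the paper's route: rotation invariance from uniqueness of the centered moment measure, then a radial change of variables. The only variation is in the last step: your main derivation of $w''>0$ and of the identity uses the pointwise Monge--Amp\`ere equation with the radial Hessian eigenvalues $w''(r)$ and $w'(r)/r$, whereas the paper differentiates the mass-balance identity $\mu(B_r)=\nu(B_{w'(r)})$, which is exactly your cross-check (the pointwise form has the small bonus of also giving $w''(0)>0$ via $\det\nabla^2W(0)=w''(0)^2$).
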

	\begin{proof}
		Let $O \in \mathcal{O}(2)$ be an orthogonal transformation. Define $\mu_O := O_\#\mu$ with density $e^{-W_O(x)}\; dx := e^{-W(O^{-1}x)}\;dx$. Note that 
		\begin{align*}
			\nabla W_O(x) =(O^{-1})^\top\nabla W(O^{-1}x) =O\nabla W(O^{-1}x).
		\end{align*}
		Thus, if $X \sim \mu$, then $OX \sim \mu_O$ and $\nabla W_O(OX) = O\nabla W(X),$ and so
		\begin{align*}
			(\nabla W_O)_\#\mu_O = O_\#(\nabla W)_\#\mu = O_\#\nu.
		\end{align*}
		Since $\nu$ is radial, $O_\#\nu = \nu$. Since $\mu$ is centered, then $\mu_O$ is centered. By uniqueness of the centered moment measure, $\mu_O = \mu$.
		
		Thus, the support of $\mu$ is the centered Euclidean ball $B_R$ of radius $R \in (0, \infty]$, and its density is radial. Then on $B_R$, $W(x) = w(\| x\|)$ for some convex non-decreasing function $w$. The moment map has the radial form $\nabla W(x) = w'(\| x\|) \frac{x}{\|x\|},$ and $\nabla W$ sends the ball $B_{\|x\|}$ onto the ball $B_{w'(\|x\|)}$, and since $\nu$ is the moment measure of $\mu$, \begin{equation}    \label{eq: ballMeasures}
			\mu(B_{\|x\|}) = \nu(B_{w'(\| x\|)}).
		\end{equation}
		
		Since both radial distribution functions in \eqref{eq: ballMeasures} are strictly increasing on their supports, the function $w'$ must be strictly increasing, and $w'(0) = 0$. Since $\nu$ has full support, \eqref{eq: ballMeasures} implies $\lim_{\| x \| \rightarrow R} w'(\|x\|) = \infty$. Thus, $w'$ is invertible from $[0, R)$ onto $\mathbb{R}_+$, and we can define $q(s) = (w')^{-1}(s)$.
		
		Differentiating the radial identity \eqref{eq: ballMeasures}, where $C_\mu, C_\nu$ are the normalizing constants of $\mu, \nu$ respectively,
		\begin{align*}
			\frac{d}{dr}\left(2\pi\int_0^r e^{-w(\rho)}\rho\;d\rho\right) &= \frac{d}{dr}\left(\frac{1}{2}\int_0^{w'(r)} e^{-v(s)}s\;ds\right)
		\end{align*}
		gives
		\begin{align*}
			2\pi e^{-w(r)}r &= \frac{1}{2} e^{-v(w'(r))}w'(r)w''(r).
		\end{align*}

		Substituting $s = w'(r), r = q(s) = (w')^{-1}(s)$ and multiplying by $q'(s) = \frac{1}{w''(q(s))}$ yields the desired identity
		\begin{align*}
			e^{-w(q(s))}q(s)q'(s) &= \frac{1}{4\pi}e^{-v(s)}s.
		\end{align*}
	\end{proof}
	
	We next show that the essential continuity of $W$, as in Theorem \ref{thm:momentexist}, implies that $\mu$ must have unbounded support.
	\begin{lem} \label{prop: Unbounded Support of Mu}
		Let $w:\RR_+ \to \RR \cup \{\infty\}$ be as in Lemma \ref{lem:rotinvariance}. Then,
		\begin{align*}
			R := \sup\{r \geq 0 : w(r) < \infty\} &= \infty.
		\end{align*}
		Equivalently, $\mathrm{support}(\mu) = \RR^2$, and the function $q$ from \eqref{eq:qdef} satisfies $\lim\limits_{s\to\infty}q(s) = \infty$.
	\end{lem}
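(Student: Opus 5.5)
The plan is to argue by contradiction: assume $R<\infty$, and play the essential continuity of $W$ from Theorem \ref{thm:momentexist} against the radial identity \eqref{eq:radialmongeampere}. Throughout I use that $q$ maps $\RR_+$ onto $[0,R)$ and is increasing (Lemma \ref{lem:rotinvariance}).

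\emph{Step 1: $w$ blows up at $R$.} Suppose $R<\infty$. Then $W=\infty$ outside $\overline{B_R}$. Essential continuity requires the discontinuity set of $W$ to have zero $\mathcal H^{1}$-measure. The circle $\partial B_R$ has positive $\mathcal H^1$-measure, and by radial symmetry $W$ is either continuous at every point of it or at none. So $W$ is continuous at $\partial B_R$, which forces $w(r)\to\infty$ as $r\uparrow R$. Setting $u(s):=w(q(s))$ and using $q(s)\to R$, this gives $u(s)\to\infty$ as $s\to\infty$.

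\emph{Step 2: rewrite \eqref{eq:radialmongeampere} as an ODE for $u$ and get a lower bound.} Since $w'(q(s))=s$, we have $u'(s)=s\,q'(s)$. Hence \eqref{eq:radialmongeampere} becomes
$$-\frac{d}{ds}e^{-u(s)}=e^{-u(s)}u'(s)=\frac{e^{-v(s)}s^2}{4\pi q(s)}.$$
Integrating from $s$ to $\infty$ and using $e^{-u(\infty)}=0$ from Step 1 gives
$$e^{-u(s)}=\int_s^\infty \frac{e^{-v(\sigma)}\sigma^2}{4\pi q(\sigma)}\,d\sigma .$$
For $\sigma\ge s\ge 1$ we have $q(\sigma)\ge q(1)>0$, and $v(\sigma)=\sqrt{1+\sigma^2}-1\ge \sigma-1$. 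Therefore $e^{-u(s)}\le C\int_s^\infty \sigma^2e^{-\sigma}\,d\sigma\le C' s^2e^{-s}$. This yields
$$u(s)\ \ge\ s-2\log s-C\qquad (s\ge 1).$$

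\emph{Step 3: $q$ is unbounded, a contradiction.} From $q'=u'/s$, integrating by parts on $[1,S]$ gives
$$q(S)-q(1)=\int_1^S\frac{u'(\sigma)}{\sigma}\,d\sigma=\frac{u(S)}{S}-u(1)+\int_1^S\frac{u(\sigma)}{\sigma^2}\,d\sigma .$$
The first term is nonnegative for large $S$, since $u\to\infty$. By Step 2, the last integral is at least $\int_1^S(\sigma^{-1}-2\sigma^{-2}\log\sigma-C\sigma^{-2})\,d\sigma$, which grows like $\log S$. Hence $q(S)\to\infty$. This contradicts $q<R<\infty$, so $R=\infty$.

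Once $R=\infty$ is established, the equivalent formulations follow directly. The support of $\mu$ is $B_R=\RR^2$, and $q$ maps onto $[0,R)=[0,\infty)$ monotonically, so $q(s)\to\infty$.

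The step I expect to be most delicate is Step 1. It needs a careful reading of ``essentially continuous'' for a convex function taking the value $+\infty$: continuity at boundary points must be understood in the extended sense, meaning $W(x)\to\infty$ as $x\to\partial B_R$ from inside. Once $u\to\infty$ is secured, the tail bound and the logarithmic divergence in Steps 2 and 3 are routine.
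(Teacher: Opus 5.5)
Your proof is correct and uses the same ingredients as the paper: essential continuity forcing $A(s)=w(q(s))\to\infty$, the radial Monge--Amp\`ere identity rewritten as an ODE for $A$, and the integration by parts relating $A$ and $q$. The difference is that you run the contradiction in the opposite direction. The paper uses $q\to R<\infty$ to get $A(s)=o(s)$, so $A'(s)\lesssim s^2e^{-s/2}$ is integrable and $A$ stays bounded. You instead use $A\to\infty$ as a boundary condition at infinity to integrate $e^{-A}$ exactly, obtain $A(s)\ge s-2\log s-C$, and convert this into $q(S)\ge \log S-C'$.

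A small bonus of your ordering is that Steps 2--3 use $R<\infty$ only through $A\to\infty$. They therefore give a quantitative logarithmic lower bound on $q$ that anticipates the later estimate $q(s)=(1+o(1))\log s$.
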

	\begin{proof}
		Assume, towards contradiction, that $R<\infty$. 
		By construction $\nu$ has full support and since, for $r = \|x\|$, $\nabla W(x) = w'(r)\frac{x}{r}$, the relation $\nabla W_{\#}\mu = \nu$ implies that
		$$\lim\limits_{r\uparrow R} w'(r) = \infty.$$ 
		As in Lemma \ref{lem:rotinvariance} set $q(s):=(w')^{-1}(s)$ for which the above shows
		\begin{equation} \label{eq:qlimit}
			\lim\limits_{s\to \infty} q(s) = R.
		\end{equation}
		Moreover, by Theorem \ref{thm:momentexist}, $W$ is essentially continuous, from which we deduce
		\begin{equation} \label{eq:wlimit}
		 \lim\limits_{r\uparrow R} w(r)= \infty.
		\end{equation}
		Indeed, otherwise $W$ would have a discontinuity on $\partial B_R$ the boundary of the ball of radius $R$, which has positive one-dimensional Hausdorff measure.
		
		Following the change of variable formula in \eqref{eq:radialmongeampere} we set $A(s):=w(q(s)).$
		By definition, $w'(q(s))=s$, and so we have the identity $A'(s)=sq'(s)$. Using these identities we can rewrite \eqref{eq:radialmongeampere} as
		\begin{equation}\label{eq:rewritemongeampere}
			A'(s)=\frac{s^2}{4\pi q(s)}e^{A(s)-v(s)}.
		\end{equation}
		Now, integration by parts gives, for any fixed $s_0>0$,
		$$
		A(s)=A(s_0)+\int_{s_0}^s uq'(u)\,du
		=A(s_0)+s q(s)-s_0q(s_0)-\int_{s_0}^s q(u)\,du.
		$$
		Dividing by $s$ and using $\lim\limits_{u\to \infty} q(u) = R$, we obtain
		$$
		\frac{A(s)}{s}\xrightarrow{s\to \infty} 0.
		$$
		For all sufficiently large $s$, we therefore have $A(s)\leq s/2$ and from \eqref{eq:qlimit}, by making $s$ larger, we can also ensure $q(s)\geq R/2$. 
		Since $v(s)=\sqrt{1+s^2}-1\geq s-1$, \eqref{eq:rewritemongeampere} yields
		$$
		A'(s)\leq \frac{e}{R} s^2 e^{-s/2}
		$$
		for all sufficiently large $s$. The right-hand side is integrable on
		$[1,\infty)$, so $A(s)$ has a finite limit as $s\to\infty$. On the other hand, combining \eqref{eq:qlimit} and \eqref{eq:wlimit} we see that
		$$\lim\limits_{s\to \infty}A(s)=\lim\limits_{s\to \infty}w(q(s))\to\infty.$$ This is a contradiction and so $R=\infty$.
	\end{proof}
	\subsection{Understanding the tails and volume growth of $\mu$}
	We now study the tails and volume growth of $\mu$, the next technical lemma essentially captures the asymptotic behavior of the implicit potential $w$.
	
	\begin{lem} \label{lem : p(s)}
		Let $q(s) = (w')^{-1}(s)$ be as in \eqref{eq:qdef}, and define 
		\begin{align} \label{eq:pdef}
			p(s) = sq'(s).
		\end{align}
		Then $\lim\limits_{s\to \infty}p(s) = 1$.
	\end{lem}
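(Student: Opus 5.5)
The plan is to turn the radial Monge--Amp\`ere identity \eqref{eq:radialmongeampere} into a first-order ODE for $p$ alone, and then use the fact that $p\equiv 1$ is a \emph{repelling} equilibrium of that ODE. Any deviation from $1$ at a large $s$ would then either blow up in finite time or force $q$ to stay bounded. The first is impossible because $p$ is finite for all $s$; the second contradicts Lemma \ref{prop: Unbounded Support of Mu}.

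\textbf{Step 1: the ODE.} As in the proof of Lemma \ref{prop: Unbounded Support of Mu}, set $A(s)=w(q(s))$, so that $A'(s)=sq'(s)=p(s)$. By \eqref{eq:rewritemongeampere},
$$p(s)=\frac{s^2}{4\pi q(s)}e^{A(s)-v(s)} .$$
Since $q>0$ and $q$ is smooth for $s>0$, this shows $p$ is smooth, positive and finite on $(0,\infty)$. Taking logarithms and differentiating, and using $A'=p$ and $q'/q=p/(sq)$, gives
$$p'(s)=p(s)\Big(p(s)\big(1-\tfrac{1}{s\,q(s)}\big)-v'(s)+\tfrac{2}{s}\Big).$$
We have $v'(s)=s/\sqrt{1+s^2}\to 1$, $2/s\to 0$, and, crucially, $1/(sq(s))\to 0$ because $q(s)\to\infty$ by Lemma \ref{prop: Unbounded Support of Mu}. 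So for any $\eps\in(0,1)$ and $\delta>0$ there is $s_1$ such that for $s\ge s_1$
$$p\,\big(p(1-\delta)-1\big)\;\le\; p' \;\le\; p\,\big(p-1+\delta\big).$$

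\textbf{Step 2: ruling out $p\ge 1+\eps$.} Choose $\delta$ small (depending on $\eps$) and suppose $p(s_2)\ge 1+\eps$ for some $s_2\ge s_1$. Then $p(1-\delta)-1\ge \eps/2$ at $s_2$, so $p'>0$ there. Hence $p$ keeps increasing and remains $\ge 1+\eps$ on $[s_2,\infty)$. On this interval we also get $p'\ge c_\eps p^2$ for some $c_\eps>0$. Comparison with the Riccati equation $y'=c_\eps y^2$ forces $p$ to blow up at a finite $s$, contradicting the finiteness of $p$ on $(0,\infty)$ from Step 1. Therefore $p<1+\eps$ on $[s_1,\infty)$.

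\textbf{Step 3: ruling out $p\le 1-\eps$.} Suppose $p(s_2)\le 1-\eps$ for some $s_2\ge s_1$, with $\delta<\eps/2$. Then $p'\le -(\eps/2)p<0$ at $s_2$, so $p$ stays below $1-\eps$ from then on. Hence $p(s)\le p(s_2)e^{-\eps(s-s_2)/2}$ for $s\ge s_2$. Then
$$q(s)=q(s_2)+\int_{s_2}^s \frac{p(u)}{u}\,du$$
stays bounded, contradicting $q\to\infty$ from Lemma \ref{prop: Unbounded Support of Mu}. Combining Steps 2 and 3, $|p-1|<\eps$ for all large $s$, and since $\eps$ is arbitrary, $p(s)\to 1$.

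The main delicate point is Step 1. We must know that $p$ is genuinely finite and smooth on all of $(0,\infty)$, which holds because the explicit formula involves only $q>0$ and the smooth function $A$. We must also know that the term $p/(sq)$ is negligible, which uses the unboundedness of $q$ and not merely $s\to\infty$. Once the differential inequality is in place, the two-sided instability argument is routine.
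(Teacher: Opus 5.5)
Your proposal is correct and follows essentially the paper's route. You derive the same equation $\frac{p'}{p}=\bigl(1-\frac{1}{sq}\bigr)p-v'+\frac{2}{s}$ and use the instability of the equilibrium $p=1$: an excursion above $1$ forces finite-time blow-up, while an excursion below $1$ forces exponential decay of $p$, hence bounded $q$, contradicting Lemma~\ref{prop: Unbounded Support of Mu}. Your trapping argument with the comparison $(1/p)'\le -c_\eps$ is a slightly cleaner substitute for the paper's explicit Gronwall solutions. As a minor aside, $1/(sq(s))\to 0$ needs only $q(s)\ge q(1)>0$ for $s\ge 1$, not the unboundedness of $q$.
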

	
	\begin{proof}
		As in the proof of Lemma \ref{lem:rotinvariance}, let $A(s):= w(q(s))$, and note that since $w'(q(s)) = s$, 
		\begin{align*}
			A'(s) &= w'(q(s))q'(s)
			= sq'(s)
			= p(s).
		\end{align*}
		By Lemma \ref{lem:rotinvariance}, we have
		\begin{align*}
			e^{-A(s)}q(s)p(s) &= \frac{1}{4\pi}e^{-v(s)}s^2,
		\end{align*}
		so that
		\begin{equation}\label{eq : A(s)}
			w(q(s)) = A(s) = v(s)+\log(q(s))+\log(p(s))-2\log(s)+\log(4\pi).
		\end{equation}
		Differentiating, and using $A'(s) = p(s)$, we have
		\begin{equation*}
			p(s) = v'(s)+\frac{q'(s)}{q(s)}+\frac{p'(s)}{p(s)}-\frac{2}{s},
		\end{equation*}
		and using $q'(s) = \frac{p(s)}{s}$, we have
		\begin{equation} \label{eq: p(s) as deriv of A(s)}
			\frac{p'(s)}{p(s)}=\left(1-\frac{1}{sq(s)}\right)p(s)-v'(s)+\frac{2}{s}. 
		\end{equation}
		Note that $v(s) = \sqrt{1+s^2}-1$, and so $v'(s) = \frac{s}{\sqrt{1+s^2}},$ and $v'(s) \xrightarrow{s\to \infty} 1$.
		In light of this, fix $\eps \in (0,\frac{1}{10})$, and choose $s_0$ large enough, so that for all $s \geq s_0$,
		\begin{align*}
			v'(s) \geq 1-\frac{\varepsilon}{2} \quad \text{ and } \quad \frac{2}{s} \leq \frac{\eps}{2}.
		\end{align*}
		Suppose that $p(s_0) \leq 1-10\eps$, so that by \eqref{eq: p(s) as deriv of A(s)} we have the differential inequality,
		\begin{align*}
			\frac{p'(s)}{p(s)} \leq p(s) - 1 +\eps, \quad p(s_0)\leq 1-10\eps.
		\end{align*}
		Solving the corresponding differential equation by Gronwall's lemma we deduce that for any $s \geq s_0$,
		$$p(s) \leq \frac{1-\eps}{1+\frac{9\eps}{1-10\eps}e^{(1-\eps)(s-s_0)}}.$$ 
		In particular, since $\eps <\frac{1}{10}$,
		$$\lim\limits_{s\to\infty}q(s) = \int\limits_{0}^\infty q'(r)dr =  \int\limits_{s_0}^\infty \frac{p(r)}{r}dr + q(s_0) < \infty,$$ 
		which contradicts Lemma \ref{prop: Unbounded Support of Mu}. Since the same argument applies to any $s > s_0$ as well, we conclude $p(s) \geq 1-10\eps$ in this regime.
		
		To get an upper bound, we instead use Lemma \ref{prop: Unbounded Support of Mu} according to which $\lim\limits_{s\to\infty}q(s) = \infty$. So we may choose $s_0$ sufficiently large so that
		$$\frac{1}{sq(s)} \leq \eps \text{ and } v'(s) -\frac{2}{s} \leq 1 +\eps.$$
		Thus, assume now that for some $\eps \in (0,\frac{1}{10})$, $p(s_0) \geq 1+10\eps$, in which case \eqref{eq: p(s) as deriv of A(s)} implies the differential inequality
		$$
			\frac{p'(s)}{p(s)} \geq (1-\eps)p(s) - 1 -\eps, \quad p(s_0)\geq 1+10\eps.
		$$
		Proceeding in the same way by solving the corresponding equality, Gronwall's inequality implies this time
		$$p(s) \geq \frac{1+\eps}{1-\eps - \frac{2\eps(4-5\eps)}{1+10\eps}e^{(1+\eps)(s-s_0)}}.$$
		In particular, since $\eps < \frac{1}{10}$, we can see that $p(s)$ blows up at some finite time, contradicting the fact that $q'(s) = \frac{p(s)}{s}$ is finite for $s < \infty$, which follows from Lemma \ref{prop: Unbounded Support of Mu} and the smoothness of $q$, and we conclude that $p(s) \leq 1+10\eps$, when $s$ is large enough.
		
		Combining everything, we have for large $s$, 
		\begin{align*}
			1-10\varepsilon \leq p(s) \leq 1+10\varepsilon,
		\end{align*}
		and since $\varepsilon$ is arbitrary, $\lim\limits_{s\to \infty}p(s) = 1.$
	\end{proof}
	Next, to augment Lemma \ref{prop: Unbounded Support of Mu} we also quantify the rate of divergence for $q$. As we show although $\mu$ has unbounded support its tails decrease at a double exponential rate, consistent with the explicit example from Section \ref{sec:ideas}. This is captured by the fact that $q(s) \simeq \log(s)$, or in other words $w'(s) \simeq e^s$.
	\begin{lem} \label{lem : q(s)}
		Let $q$ be as in \eqref{eq:qdef}. Then,
		\begin{align*}
			q(s) = (1+o(1))\log(s).
		\end{align*}
	\end{lem}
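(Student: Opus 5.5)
The plan is to integrate the relation $q'(s) = p(s)/s$ from Lemma \ref{lem : p(s)}, which gives $q$ as an integral of $p(u)/u$. Fix $\eps\in(0,1)$. By Lemma \ref{lem : p(s)} there is $s_0>1$ such that $1-\eps\leq p(u)\leq 1+\eps$ for all $u\geq s_0$. Then for $s\geq s_0$,
$$
q(s) = q(s_0) + \int_{s_0}^s \frac{p(u)}{u}\,du,
$$
which is justified because $q$ is smooth on $(0,\infty)$ by Lemma \ref{lem:rotinvariance}. Bounding $p$ between $1\pm\eps$ gives
$$
q(s_0) + (1-\eps)\left(\log s - \log s_0\right) \leq q(s) \leq q(s_0) + (1+\eps)\left(\log s - \log s_0\right).
$$

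Next, divide by $\log s$ and let $s\to\infty$. The terms $q(s_0)/\log s$ and $(1\pm\eps)\log s_0/\log s$ are constants over $\log s$, so they tend to $0$. This yields
$$
1-\eps \leq \liminf_{s\to\infty} \frac{q(s)}{\log s} \leq \limsup_{s\to\infty}\frac{q(s)}{\log s} \leq 1+\eps.
$$
Since $\eps$ is arbitrary, $q(s)/\log s\to 1$, that is, $q(s) = (1+o(1))\log s$.

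There is no real obstacle, since all the analytic work sits in Lemma \ref{lem : p(s)}. The only points to confirm are that $q(s_0)$ is finite and that $q$ is absolutely continuous on $[s_0,s]$. Both follow from Lemma \ref{lem:rotinvariance}, because $q$ is the smooth inverse of $w'$ on its support, together with Lemma \ref{prop: Unbounded Support of Mu}, which guarantees that $q$ is defined on all of $\RR_+$.
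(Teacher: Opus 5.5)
Your proposal is correct and matches the paper's proof: integrate $q'(s)=p(s)/s$ and use $p(s)\to 1$ from Lemma~\ref{lem : p(s)}. Your explicit $\varepsilon$--$s_0$ splitting just writes out the step the paper compresses into $\int_1^s p(u)/u\,du=(1+o(1))\log s$.
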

	
	\begin{proof}
		By Lemma \ref{lem : p(s)}, $\lim\limits_{s\to \infty}p(s) = 1$, where $p$ is as in \eqref{eq:pdef} and so 
		\begin{align*}
			q'(s) &= \frac{p(s)}{s} = \frac{1+o(1)}{s}.
		\end{align*}
		Thus, 
		\begin{align*}
			q(s) &= q(1) + \int_{1}^s \frac{p(u)}{u}\;du  = q(1) + (1+o(1))\log(s).
		\end{align*}
	\end{proof}
	
	To prove Theorem \ref{thm:nocontract}, we will need to show that the tails of $\mu$ are too heavy to be compatible with the existence of a Lipschitz transport map when measured against the Riemannian metric induced by $\nabla W$. 
	Towards that, it will be more convenient to rewrite $\mu$ with respect to the Riemannian volume metric. Since $d\mathrm{Vol}_g  = \sqrt{\det{\nabla W^2}}dx$ we have the following expression (see also \cite{kolesnikov2014hessian}),
	\begin{equation} \label{eq:relativevolumedensity}
		\mu \propto e^{-\frac{1}{2}E(x)}d\mathrm{Vol}_g(x), \text{ where } E(x) = W(x) + V(\nabla W(x)).
	\end{equation}
	Using radial symmetry, we also have the expression $E(x) = w(\|x\|) + v(w'(\|x\|)).$
	With respect to $E$ we have the following estimate:
	\begin{lem} \label{lem:volumegrowth}
		There exists a constant $c>0$, such that for any $t >0$ large enough,
		$$\mu(E(x) \geq t) \geq c\sqrt{\log(t)}e^{-\frac{t}{2}}.$$
	\end{lem}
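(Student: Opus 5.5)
Since all the objects are radial, I will pass to the variable $s=w'(\|x\|)$, i.e. $\|x\|=q(s)$. By the definition of $E$ in \eqref{eq:relativevolumedensity},
$$E(x)=w(q(s))+v(s)=A(s)+v(s),$$
where $A(s)=w(q(s))$ as in the proofs of Lemmas \ref{lem:rotinvariance} and \ref{lem : p(s)}. Substituting \eqref{eq : A(s)} gives
$$\mathcal E(s):=A(s)+v(s)=2v(s)+\log q(s)+\log p(s)-2\log s+\log(4\pi).$$
Since $\nabla W$ maps $\{\|x\|\ge q(s)\}$ onto $\{\|y\|\ge s\}$ and $\nabla W_\#\mu=\nu$, we get $\mu(\|x\|\ge q(s))=\nu(\|y\|\ge s)$. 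The problem therefore reduces to a one-dimensional comparison between the growth of $\mathcal E$ and the tail of $\nu$.

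\textbf{Step 1: a lower bound on $\mathcal E$.} Using $v(s)=s-1+O(1/s)$, Lemma \ref{lem : q(s)} ($q(s)=(1+o(1))\log s$) and Lemma \ref{lem : p(s)} ($p(s)\to1$), I obtain
$$\mathcal E(s)=2s+\log\log s-2\log s+C_0+o(1),\qquad C_0=\log(4\pi)-2.$$
I will not try to prove that $\mathcal E$ itself is monotone, because controlling $p'/p$ is awkward. Instead I fix $\delta>0$ and set
$$f(s)=2s+\log\log s-2\log s+C_0-\delta .$$
Then $\mathcal E(u)\ge f(u)$ for all $u\ge s_1$. Moreover $f$ is eventually strictly increasing, since $f'(s)=2+o(1)$.

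\textbf{Step 2: inclusion of sets.} For large $t$, let $s_t$ be the solution of $f(s_t)=t$. For every $u\ge s_t$ we then have $\mathcal E(u)\ge f(u)\ge t$. Hence
$$\{E\ge t\}\supseteq\{\|x\|\ge q(s_t)\},\qquad\text{so}\qquad \mu(E\ge t)\ge \nu(\|y\|\ge s_t).$$

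\textbf{Step 3: computing the tail.} In polar coordinates,
$$\nu(\|y\|\ge s)=\frac12\int_s^\infty e^{-v(u)}u\,du\ge c_1\,s e^{-s}\qquad\text{for large } s,$$
since $v(u)\le u$. From $f(s_t)=t$ we get
$$e^{-s_t}=e^{-t/2}\,e^{(C_0-\delta)/2}\,\frac{\sqrt{\log s_t}}{s_t},$$
and therefore $s_te^{-s_t}=c_2\sqrt{\log s_t}\,e^{-t/2}$. Finally, $s_t=(1/2+o(1))\,t$, so $\log s_t\ge \tfrac12\log t$ for large $t$. This yields $\mu(E\ge t)\ge c\sqrt{\log t}\,e^{-t/2}$.

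The main obstacle is the absence of monotonicity of $E$ in the radius, since we only know $p\to1$ and nothing about $p'$. Replacing $\mathcal E$ by the explicit increasing minorant $f$ in Step 1 avoids this. The cost is only the constant $e^{-\delta/2}$, which is absorbed into $c$. Everything else is bookkeeping of $o(1)$ terms from Lemmas \ref{lem : p(s)} and \ref{lem : q(s)}.
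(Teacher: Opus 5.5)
Your proof is correct and follows the paper's argument. It uses the same change of variables $s=w'(\|x\|)$, the same expression for $E$ from \eqref{eq : A(s)} with asymptotics $2s-2\log s+\log\log s+O(1)$, and the same reduction to the tail bound $\nu(\|y\|\ge s)\gtrsim se^{-s}$. The only deviation is that you replace the paper's inversion of $h$ with the increasing minorant $f$, which is valid and lets you solve for $e^{-s_t}$ exactly rather than through an asymptotic inversion.

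The monotonicity you sidestep is not actually an obstacle. Since $A'(s)=p(s)$, you have $\mathcal E'(s)=p(s)+v'(s)\to 2$, and in fact $\mathcal E'(s)>0$ because $E(x)=w(\|x\|)+v(w'(\|x\|))$ is a sum of nondecreasing functions of $\|x\|$. This is exactly how the paper makes $h$ invertible.
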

	
	\begin{proof}
		Since all the relevant quantities are radially symmetric, we fix a direction $e_1$ and do all computations in that direction. Thus recall the definition of $q$ from \eqref{eq:qdef}, and define
		\begin{equation} \label{eq : h(s)}
			h(s) := E(q(s)e_1) = w(q(s))+v(s).
		\end{equation}
		If $X \sim \mu$ and $Y = \nabla W(X) \sim \nu$, since we are working with radial measures, we have $\| Y\| = w'(\| X\|)$, and so we have that $E(X) = h(\| Y\|)$, thus 
		\begin{equation*} 
			\mu(E(x) \geq t) = \nu(h(\|y\|) \geq t).
		\end{equation*}
		By adding $v(s)$ to both sides of \eqref{eq : A(s)}, we have
		\begin{equation*}
			h(s) = 2v(s)+\log(q(s))+\log(p(s))-2\log(s)+\log(4\pi).
		\end{equation*}
		Using Lemma \ref{lem : p(s)} and Lemma \ref{lem : q(s)}, along with $v(s) = \sqrt{1+s^2}-1$, we get
		\begin{equation} \label{eq: h(s) modified by estimates}
			h(s) = 2s-2\log(s)+\log\log(s)+O(1).
		\end{equation}
		Furthermore, we can differentiate $h$ to obtain, 
		\begin{align} \label{eq:hderv}
			h'(s) &= (w(q(s))+v(s))'= w'(q(s))q'(s)+v'(s)= p(s)+v'(s)\xrightarrow{s\to \infty} 2,
		\end{align}
		where the last equality follows by definition of $p$, as in \eqref{eq:pdef}. Thus there exists some $s_0 > 0$ so that $h'(s) >0$ when $s > s_0$, and in particular $h$ is invertible on $[s_0,\infty)$. Let $s(t)$ be the inverse function in this regime, so that $h(s(t)) = t$. By \eqref{eq: h(s) modified by estimates}, we have 
		\begin{equation} \label{eq : s(t)}
			s(t) = \frac{t}{2}+\log(t)-\frac{1}{2}\log\log(t)+O(1).
		\end{equation}
		Indeed, if we substitute $s = \frac{t}{2}+\log(t)-\frac{1}{2}\log\log(t)+B$ for some constant $B$ into \eqref{eq: h(s) modified by estimates}, we get $h(s) = t+2B+O(1).$ 
		For large $t$, we then have 
		\begin{align*}
			\mu(E(x) \geq t) &= \nu(h(\|y\|) \geq t) = \nu(\|y\| \geq s(t)).
		\end{align*}
		On the other hand, by definition of $\nu$, we have with a straightforward computation
		\begin{equation} \label{eq : radialTailOfNu}
			\nu(\| y\| \geq s) = \frac{1}{2}\int_s^\infty re^{-\sqrt{1+r^2}+1}dr= \frac{1}{2}(\sqrt{1+s^2}+1)e^{-(\sqrt{1+s^2}-1)} 
		\end{equation}
		Since $s-1<\sqrt{1+s^2-1} < s$, we obtain that 
		$$
		\nu(\|y\|\geq s)\geq \frac{1}{2e}se^{-s}\implies \mu(E(x)\geq t) \geq \frac{1}{2e}s(t)e^{-s(t)}.
		$$ 
		Combining this bound with \eqref{eq : s(t)} now leads to 
		$$\frac{1}{c}\sqrt{\log(t)}e^{-\frac{t}{2}}\geq s(t)e^{-s(t)}\geq c\sqrt{\log(t)}e^{-\frac{t}{2}},$$ where $c>0$ is some constant. This then
		gives the desired bound $\mu(E(x) \geq t) \geq c\sqrt{\log(t)}e^{-\frac{t}{2}}.$

	\end{proof}
	
	\subsection{Volume growth of Lipschitz mappings}
	Now, let us assume the existence of an $L$-Lipschitz map $T$ such that $T_{\#}\gamma^2_{1/2} = \mu$. We show that such an existence implies a bound on the growth of the functional $E$ from \eqref{eq:relativevolumedensity}, in a way which contradicts Lemma \ref{lem:volumegrowth}.
	\begin{lem} \label{lem:contractnogrowth}
		Let $T:(\RR^2,\|\cdot\|) \to (\RR^2,\nabla^2 W)$ and assume that $T_\# \gamma_{1/2}^2=\mu$ and that $T$ is $L$-Lipschitz for some $L > 0$. Then, there exists a constant $C_L\in \RR$, depending only on $L$, such that for any $t$ large enough
		$$\mu(E(x) \geq t + C_L) \leq e^{-\frac{t}{2}}.$$
	\end{lem}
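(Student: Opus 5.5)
The plan is to avoid radial distance estimates and instead compare densities through the area formula, using the representation \eqref{eq:relativevolumedensity} of $\mu$ relative to $\mathrm{Vol}_g$ with $g=\nabla^2 W$. A distance-based argument does not suffice. Lemma \ref{lem:liptograd} gives $d_g(T(x),T(0))\le L\|x\|$, and one can check $d_g(0,y)\approx 2\sqrt{w'(\|y\|)}$ and $E\approx 2w'$. Combined, these only yield $\mu(E\ge t)\lesssim e^{-t/(2L^2)}$, whose exponent depends on $L$ and cannot contradict Lemma \ref{lem:volumegrowth}. A Jacobian bound instead turns $L$ into an additive constant $\log L^2$ inside the exponent, which is what the statement asks for.

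\textbf{Step 1: Jacobian bound.} By Lemma \ref{lem:liptograd} (direction $(1)\Rightarrow(2)$, valid without $C^1$), at a.e.\ $x$ the matrix $\nabla T(x)^\top\nabla^2W(T(x))\nabla T(x)$ has operator norm at most $L^2$. In dimension $2$ its determinant is therefore at most $L^4$. Define the Jacobian of $T$ from Lebesgue measure to $\mathrm{Vol}_g$ by
$$J_g(x)=|\det\nabla T(x)|\sqrt{\det\nabla^2W(T(x))}.$$
This is the square root of that determinant, so $J_g\le L^2$ a.e.

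\textbf{Step 2: Area formula.} Write $\mu=Z^{-1}e^{-E/2}d\mathrm{Vol}_g$ and $d\gamma^2_{1/2}=(4\pi)^{-1}e^{-\|x\|^2/4}dx$. Since $T$ is locally Euclidean-Lipschitz, the area formula applies. Together with $T_\#\gamma^2_{1/2}=\mu$, it gives for $\mathrm{Vol}_g$-a.e.\ $y$
$$Z^{-1}e^{-E(y)/2}=\sum_{x\in T^{-1}(y),\,J_g(x)>0}\frac{(4\pi)^{-1}e^{-\|x\|^2/4}}{J_g(x)}.$$
This requires discarding the set $N=\{J_g=0\}$. Again by the area formula, $T(N)$ is Lebesgue-null, so $\mu(T(N))=0$. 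Hence $\gamma^2_{1/2}(N)\le\gamma^2_{1/2}(T^{-1}(T(N)))=0$.

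\textbf{Step 3: Pointwise bound on $E\circ T$.} Keeping a single term in the sum and using $J_g\le L^2$ gives, for $\gamma$-a.e.\ $x$,
$$e^{-E(T(x))/2}\ \ge\ \frac{Z}{4\pi L^2}\,e^{-\|x\|^2/4},$$
that is, $E(T(x))\le \|x\|^2/2+C_L$ with $C_L=2\log(4\pi L^2/Z)$. Some care is needed because this holds only for a.e.\ $y$, and one must pass to $\gamma$-a.e.\ $x$. I expect this null-set bookkeeping, together with the justification of the area formula, to be the only delicate point. Both use that $\mu\ll\mathrm{Vol}_g$ and that $T$ is Lipschitz on compacts, which holds because $g$ is smooth and nondegenerate on $\RR^2=\mathrm{support}(\mu)$ by Lemma \ref{prop: Unbounded Support of Mu}.

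\textbf{Step 4: Conclusion.} From Step 3,
$$\mu(E\ge t+C_L)=\gamma^2_{1/2}\big(E(T(x))\ge t+C_L\big)\le\gamma^2_{1/2}\big(\|x\|^2\ge 2t\big).$$
For the Gaussian with covariance $2\mathrm{I}_2$, $\|x\|^2/2$ is exponential with rate $1/2$, so the last probability equals $e^{-t/2}$ exactly, as claimed.
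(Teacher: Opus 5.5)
Your proof is correct and follows essentially the same route as the paper: the Jacobian bound $J_g\le L^2$ from Lemma \ref{lem:liptograd}, the area formula with the same treatment of the null set $\{J=0\}$, the pointwise bound $E(T(x))\le \|x\|^2/2+C_L$, and the exact $\chi^2_2$ tail of $\gamma^2_{1/2}$. The only cosmetic difference is that you apply the area formula directly to the density $e^{-E/2}$ relative to $\mathrm{Vol}_g$; the paper instead works with the Lebesgue density $e^{-W}$ and the Euclidean Jacobian, then inserts the square-rooted Monge--Amp\`ere equation, which is precisely the identity encoded in \eqref{eq:relativevolumedensity}.
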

	\begin{proof}
		We will first translate the Lipschitz assumption into a pointwise Jacobian bound. Applying Lemma \ref{lem:liptograd} and since we are working in $\RR^2$, we have the following bound on the Jacobian determinant
		\begin{align} \label{eq:lipbound}
			J_{\nabla^2 W}T(x) &:= J_{\|\cdot\|}T(x)\sqrt{\det\nabla^2W(T(x))} \leq L^2
		\end{align}
		for almost every $x$. Above, $J_{\|\cdot\|}T(x) = \left|\det(\nabla T(x))\right|$ is the standard Euclidean Jacobian determinant. Since $T$ is locally Lipschitz, the area formula applies, and so for almost every $y$, 
		\begin{align*}
			e^{-W(y)} = C_1\sum_{x \in T^{-1}(y)} \frac{e^{-\frac{\| x\|^2}{4}}}{J_{\| \cdot \|}T(x)},
		\end{align*}
		where $C_1>0$ is a normalizing constant.
		The set $D:=\{J_{\|\cdot\|}T(x) = 0\}$ does not cause any difficulties, since by the area formula $T(D)$ has Lebesgue measure $0$, and since $T_\# \gamma_{1/2}^2 = \mu$ with $\mu$ absolutely continuous,  $\gamma_{1/2}^2(D) = 0$. Thus for almost every $x$,
		\begin{align*}
			C_1\frac{e^{-\frac{\| x\|^2}{4}}}{J_{\| \cdot \|}T(x)} \leq  e^{-W(T(x))} \iff 	J_{\| \cdot \|}T(x) &\geq C_1e^{-\frac{\|x\|^2}{4}+W(T(x))}. 
		\end{align*}
		Consider the Monge-Amp\`ere equation for the moment map $\nabla W_\# \mu = \nu$ and take a square root on both sides to obtain
		\begin{align*}
			\sqrt{\det\nabla^2W(x)} &= C_2e^{\frac{1}{2}v(\|\nabla W(x)\|)-\frac{1}{2}W(x)},
		\end{align*}
		where $C_2 > 0$ is another normalizing constant. 
		Combining this identity with the previous inequality we then have
		\begin{align*}
			J_{\nabla^2 W}T(x) &= J_{\| \cdot \|}T(x)\sqrt{\det \nabla^2 W(T(x))}\\
			&\geq  C_1\cdot C_2e^{-\frac{\| x\|^2}{4}+W(T(x))}e^{\frac{1}{2}(v(\|\nabla W(T(x))\|)-W(T(x)))}\\
			&= C_1\cdot C_2e^{-\frac{\|x\|^2}{4}+\frac{1}{2}E(T(x))},
		\end{align*}
		where $E$ is as in \eqref{eq:relativevolumedensity}.
		By the assumption on $T$ \eqref{eq:lipbound},
		\begin{align*}
			L^2 \geq C_1\cdot C_2e^{-\frac{\| x\|^2}{4}+\frac{1}{2}E(T(x))}.
		\end{align*}
		Taking the logarithm, we have
		\begin{align*} 
			-\frac{\| x\|^2}{4}+\frac{1}{2}E(T(x)) \leq \log\left(\frac{L^2}{C_1\cdot C_2}\right).
		\end{align*}
		for almost every $x$. Thus, there exists a constant $C_L\in \RR$ depending only on $L$ such that
		\begin{equation}\label{eq: logEBound}
			E(T(x)) \leq \frac{\| x\|^2}{2}+C_L.
		\end{equation}
	Since $T$ transports $\gamma_{1/2}^2$ to $\mu$
		\begin{align*}
			\mu\left(E(x) \geq t+C_L\right) &= \gamma^2_{1/2}\left(E(T(x)) \geq t+C_L\right),
		\end{align*}
		and from \eqref{eq: logEBound}, we have
		\begin{align*}
			\{E(T(x)) \geq t+C_L\} \subseteq \left\{\frac{\| x\|^2}{2} \geq t\right\},
		\end{align*}
		so
		\begin{align*}
			\mu\left(E(x) \geq t+C_L\right) &\leq \gamma^2_{1/2}\left(\frac{\|x \|^2}{2} \geq t\right)= e^{-\frac{t}{2}}.
		\end{align*}
	\end{proof}
	\subsection{Finishing the proof}
	Combining Lemma \ref{lem:volumegrowth} and Lemma \ref{lem:contractnogrowth} we can now finish the proof.
	\begin{proof}[Proof of Theorem \ref{thm:nocontract}]
		First, since $V$ is convex and smooth, by Proposition \ref{prop:hessianCD}, $(\RR^2, \nabla^2 W,\mu)$ satisfies $\mathrm{CD}(\frac{1}{2},\infty)$. By Lemma \ref{lem:contractnogrowth}, any $L$-Lipschitz transport map $T$ gives an exponential tail for $E$: there exists a constant $C_L \in \RR$ depending only on $L$ such that for all sufficiently large $t$, 
		\begin{equation*}
			\mu(E(x) \geq t+C_L) \leq e^{-\frac{t}{2}}.
		\end{equation*}
		In contrast, by Lemma \ref{lem:volumegrowth}, for all sufficiently large $t$,
		\begin{equation*}
			\mu(E(x) \geq t+C_L) \geq c\sqrt{\log(t+C_L)}e^{-\frac{t+C_L}{2}}.
		\end{equation*}
		Combining, we get
		\begin{equation*}
			c\sqrt{\log(t+C_L)}e^{-\frac{t+C_L}{2}} \leq e^{-\frac{t}{2}} \implies 	c\sqrt{\log(t+C_L)}e^{-\frac{C_L}{2}} \leq 1
		\end{equation*}
		which is impossible when $t\rightarrow \infty$.
	\end{proof}

	\section{Analyzing the eigenvalue counting function}
	With the estimates established in Section \ref{sec:nocontract} we can now also study the eigenvalues of the Laplacian $\Delta_{g,\mu}$. For the purposes of proving Theorem \ref{thm:nocomparison} it would suffice to bound the eigenvalues from above. Instead, we will completely characterize their growth by proving an asymptotic Weyl law. The main technical difficulty lies in the fact that our manifold is not compact, and so some care is required to verify that the asymptotic law holds.
	
	We will take the same route as in \cite[Section 2.3]{milman2018spectral} and study the eigenvalue counting function of $M = (\RR^2, g = \nabla^2 W, \mu = e^{-W}dx = e^{-F}d\mathrm{Vol}_g)$,
	\begin{align*}
		N_M(\lambda) &:= \#\{k : \lambda_k(M) \leq \lambda\},
	\end{align*}
	where $W$ is as in Section \ref{sec:nocontract} and where the relative density $F=\frac{E}{2}$, up to a suppressed additive constant, with $E$ defined in \eqref{eq:relativevolumedensity}. 
	The first step in \cite[Section 2.3]{milman2018spectral} is to apply the unitary transformation $L^2(\mu) \rightarrow L^2(d\mathrm{Vol}_g), f \mapsto e^{-F/2}f$. Under this multiplier the weighted Laplacian
	\begin{align*}
		-\Delta_{g, \mu} &= -\Delta_g +\langle \nabla_g F, \nabla_g \cdot\rangle_g,
	\end{align*}
	becomes the Schr\"odinger operator 
	\begin{equation*}
		H = -\Delta_g+\mathcal{V},
	\end{equation*}
	where 
	\begin{equation} \label{eq: SchrodingerOperatorTerm}
		\mathcal{V} = \frac{1}{4}\| \nabla_g F\|_g^2-\frac{1}{2}\Delta_g F.
	\end{equation}
	Crucially, unitary transformations preserve eigenvalues and so $H$ and $-\Delta_{g, \mu}$ have the same spectrum.
	
	Our proof of Theorem \ref{thm:nocomparison} now relies on the following non-compact Weyl law. To prove this Weyl law holds we rely on some recent results providing sufficient conditions, \cite{dai2025weyl}. Verifying these conditions in practice requires some technical care and so we delay the proof to the end of the section.
	\begin{prop} \label{prop:weyl}
		 It holds that 
		\begin{equation} \label{eq: WeylLaw}
		N_M(\lambda) = (1+o(1)) \frac{1}{4\pi}\int_M (\lambda - \mathcal{V}(x))_+d\mathrm{Vol}_g(x).
		\end{equation}
	\end{prop}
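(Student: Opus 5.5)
The plan is to reduce \eqref{eq: WeylLaw} to the hypotheses of the non-compact Weyl law of \cite{dai2025weyl} for Schr\"odinger operators $H=-\Delta_g+\mathcal V$ on complete surfaces. Those results give $N_M(\lambda)\sim\frac{1}{4\pi}\int_M(\lambda-\mathcal V)_+\,d\mathrm{Vol}_g$ under four assumptions: (a) completeness with controlled geometry, such as a lower curvature bound or bounded injectivity radius at unit scale; (b) $\mathcal V$ bounded below and $\mathcal V\to\infty$ at infinity; (c) a slow-variation condition of the form $|\nabla_g\mathcal V|_g\le C\,\mathcal V^{3/2-\delta}$ (or the analogous oscillation bound on balls of radius $\mathcal V^{-1/2}$); (d) a doubling-type condition on the phase volume $\lambda\mapsto\int(\lambda-\mathcal V)_+$. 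So I will make everything explicit in a good radial coordinate and check each of these.

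\textbf{Step 1: the geometry in a convenient parametrization.} Everything is radial. I will parametrize $M$ by $(s,\theta)$ with $x=q(s)(\cos\theta,\sin\theta)$, i.e. $s=w'(\|x\|)=\|\nabla W(x)\|$. For the Hessian metric of a radial convex $w$,
\[
g=w''(r)\,dr^2+r\,w'(r)\,d\theta^2 .
\]
Since $dr=q'(s)\,ds$ and $w''(q(s))=1/q'(s)$, this becomes
\[
g=q'(s)\,ds^2+s\,q(s)\,d\theta^2 .
\]
By Lemma \ref{lem : p(s)} and Lemma \ref{lem : q(s)} we have $q'(s)=(1+o(1))/s$ and $sq(s)=(1+o(1))s\log s$.
\begin{itemize}
\item The arclength $\rho=\int\sqrt{q'}\,ds$ satisfies $\rho=(2+o(1))\sqrt s$, so $g$ is complete, which gives (a) in part.
\item $g$ is a warped product $d\rho^2+f(\rho)^2d\theta^2$ with $f^2=sq\approx\frac{\rho^2}{4}\cdot\log\frac{\rho^2}{4}$.
\item The Gaussian curvature is $K=-f''/f=O(\rho^{-2})$, hence bounded, and the injectivity radius is bounded below away from the origin.
\end{itemize}
To make this quantitative I need $p'(s)\to0$ (and possibly $p''$). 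I would obtain this from the ODE \eqref{eq: p(s) as deriv of A(s)}: since $p\to1$, $v'\to1$ and $1/(sq)\to0$, the right-hand side tends to $0$, so $p'\to0$. Differentiating the ODE once more gives decay of $p''$ in the same way.

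\textbf{Step 2: asymptotics of $\mathcal V$.} Up to an additive constant, $F=E/2=h(s)/2$. By \eqref{eq: h(s) modified by estimates} and \eqref{eq:hderv}, $F=s+O(\log s)$ and $dF/ds=h'(s)/2\to1$. Hence
\[
\|\nabla_gF\|_g^2=\frac{(dF/ds)^2}{q'(s)}=(1+o(1))\,s .
\]
For the Laplacian, the radial formula gives
\[
\Delta_gF=\frac{1}{\sqrt{q'}\,\sqrt{sq}}\,\partial_s\!\left(\frac{\sqrt{sq}}{\sqrt{q'}}\,\partial_sF\right)=O(1),
\]
which uses $p'=o(1)$ and $h''=p'+v''\to0$. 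Therefore $\mathcal V=\frac{s}{4}(1+o(1))$. The same computations show $|\nabla_g\mathcal V|_g=O(\sqrt s)=O(\mathcal V^{1/2})$, which comfortably gives the slow-variation condition (c) and $\mathcal V\to\infty$ in (b). Near the origin everything is smooth, so $\mathcal V$ is bounded below there.

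\textbf{Step 3: the phase volume and the main obstacle.} The volume element is $d\mathrm{Vol}_g=\sqrt{q'\,sq}\,ds\,d\theta\approx\sqrt{\log s}\,ds\,d\theta$. Hence
\[
\int(\lambda-\mathcal V)_+\,d\mathrm{Vol}_g=(1+o(1))\,2\pi\int_0^{4\lambda}\Bigl(\lambda-\frac{s}{4}\Bigr)\sqrt{\log s}\,ds=(1+o(1))\,4\pi\lambda^2\sqrt{\log\lambda}.
\]
This is regularly varying, so the doubling condition (d) holds, and it is the quantity later inverted for Theorem \ref{thm:nocomparison}.

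I expect the main obstacle to be matching our setting precisely to the hypotheses of \cite{dai2025weyl}. The critical points are the control of the $o(1)$ terms in $\mathcal V$, which need derivative bounds on $p$ rather than just its limit, and the uniform local geometry in the slowly growing $\log$-warped end. If their theorem requires $\mathcal V$-scale bounded geometry instead, I would rescale balls of radius $\mathcal V^{-1/2}\sim s^{-1/2}$ and use $K=O(\rho^{-2})$. As a fallback, I would run Dirichlet--Neumann bracketing directly: decompose $M$ into annuli in $s$ of width $o(s)$, on each of which $\mathcal V$ is nearly constant. On each annulus I would use the compact Weyl law with uniform remainders and sum, controlling the tail by the Rayleigh-quotient lower bound $\mathcal V\ge\lambda(1+\eps)$ outside $\{s\le4\lambda(1+\eps)\}$.
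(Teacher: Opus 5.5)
Your route is the paper's: verify the hypotheses of \cite[Theorem 1.10]{dai2025weyl} (completeness, bounded geometry, $\mathcal V\to\infty$, a slow-variation condition from $\|\nabla_g\mathcal V\|_g\lesssim\sqrt{\mathcal V}$, and doubling) in the coordinates $(s,\theta)$.

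\textbf{The gap is in Step 1.} Plugging $p\to1$, $v'\to1$, $1/(sq)\to0$ into \eqref{eq: p(s) as deriv of A(s)} gives $p'\to0$ and $p''\to0$ with no rate. The derivative-level claims you build on this do not follow.
\begin{itemize}
\item Substituting the ODE into your Laplacian formula gives $\Delta_gF=1+\frac{v'}{2q}+\frac{sv''}{2p}+\frac{s(p-v')^2}{4p}$. So $\Delta_gF=O(1)$ needs $p-1=O(s^{-1/2})$, not merely $p'=o(1)$.
\item Since $g^{ss}=s/p$, the bound $\|\nabla_g\mathcal V\|_g=O(\sqrt s)$ amounts to $\mathcal V'(s)=O(1)$. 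But $\mathcal V'$ contains the derivative of that last term, so it needs the same kind of rate.
\item $K=O(\rho^{-2})$ needs $p'=O(1/s)$, because $K=-\frac{q-p}{4pq}\bigl(\frac{p}{sq}-\frac{p'}{p}\bigr)\approx\frac14\bigl(p'-\frac1{sq}\bigr)$.
\end{itemize}
Most importantly, bounded geometry in \cite[Definition 1.2]{dai2025weyl}, as the paper verifies it, also requires a bound on the first covariant derivative of the curvature. This is missing from your list (a). We have $K'=\frac14p''(1+o(1))+O\bigl((p')^2+s^{-2}\bigr)$ and $\|\nabla_gK\|_g^2=\frac sp(K')^2$, so you need roughly $p''=O(s^{-1/2})$. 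Differentiating the ODE only gives $p''=(1+o(1))p'+O\bigl((p')^2+s^{-2}\bigr)$, and the ODE itself gives $p'=O(|p-1|+s^{-1})$. Everything therefore comes back to needing a quantitative rate for $p\to1$. Qualitatively you only get $\mathcal V=(1+o(1))s/4$ (because $\Delta_gF=o(s)$) and $|K|=O(1)$.

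\textbf{The missing idea is how the paper extracts that rate.} Set $m=\frac{v'-2/s}{1-1/(sq)}$. Then $m=1-\frac2s+\frac1{sq}+O(s^{-2})$, $m'=O(s^{-2})$, and the ODE reads $p'=p\bigl(1-\frac1{sq}\bigr)(p-m)$. So $p-m$ solves a linear equation whose coefficient tends to $1>0$. Since $p-m\to0$, it must be the solution integrated back from infinity:
\[
p(s)-m(s)=\int_s^\infty m'(u)\exp\Bigl(-\int_s^u p(t)\bigl(1-\tfrac{1}{tq(t)}\bigr)\,dt\Bigr)\,du=O(s^{-2}).
\]
This gives $p=1-\frac2s+\frac1{sq}+O(s^{-2})$ and $p',p'',q''=O(s^{-2})$. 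From these:
\begin{itemize}
\item $\mathcal V'(s)=\frac14+O(1/s)$, so $\|\nabla_g\mathcal V\|_g^2\lesssim\mathcal V$, and the regularity condition of \cite[Definition 1.4]{dai2025weyl} holds with $\eta(t)\propto t^{-1/2}$;
\item $K'=O(s^{-2})$, so $\|\nabla_gK\|_g=O(s^{-3/2})$.
\end{itemize}

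\textbf{Two smaller points.} First, your injectivity-radius bound is only asserted. The paper gets it from \cite[Theorem 4.7]{cheeger1982finite} through a uniform lower bound on $\mathrm{Vol}_g(B_g(x,r_0))$, using that $\frac{d}{d\rho}\log a(\rho)=O(1/\rho)$ in the geodesic coordinate. Second, the doubling condition the paper checks is for $\sigma(\lambda)=\mathrm{Vol}_g\{\mathcal V\le\lambda\}$ rather than the phase volume, though both are regularly varying here.
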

	
	With the Weyl law at hand, we can prove our main estimate.
	
	\begin{lem} \label{lem : CountingLowerBound}
		For all sufficiently large $\lambda$,
		\begin{equation*} 
			N_M(\lambda) =  (1+o(1))\lambda^2\sqrt{\log(\lambda)}.
		\end{equation*}
	\end{lem}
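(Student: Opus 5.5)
The plan is to evaluate the right-hand side of \eqref{eq: WeylLaw} explicitly, using Proposition \ref{prop:weyl} as given. Everything is radial, so I will work in the "dual" coordinate $y=\nabla W(x)$, $s=\|y\|$. Since $\nabla W$ is an isometry onto $(\RR^2,\nabla^2W^*)$, all quantities can be expressed through $q$ and $p$ from \eqref{eq:qdef}, \eqref{eq:pdef}.

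\textbf{Volume element.} In the $y$-coordinates, $\nabla^2 W^*(y)$ has a radial eigenvalue $q'(s)=p(s)/s$ and a tangential eigenvalue $q(s)/s$. Hence
$$\sqrt{\det \nabla^2W^*}=\frac{\sqrt{q(s)p(s)}}{s},$$
and in polar coordinates
$$d\mathrm{Vol}_g=2\pi\sqrt{q(s)p(s)}\,ds.$$
By Lemmas \ref{lem : p(s)} and \ref{lem : q(s)} this equals $2\pi(1+o(1))\sqrt{\log s}\,ds$ as $s\to\infty$.

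\textbf{Potential.} Here $F=E/2$, up to a constant, and $E=h(s)$ with $h$ as in \eqref{eq : h(s)}. The inverse metric in the radial direction is $g^{ss}=s/p(s)$, so
$$\tfrac14\|\nabla_gF\|_g^2=\tfrac14\Big(\tfrac{h'(s)}{2}\Big)^2\frac{s}{p(s)}=\Big(\tfrac14+o(1)\Big)s,$$
using $h'\to2$ from \eqref{eq:hderv} and $p\to1$. For the Laplacian term I will use the radial divergence formula
$$\Delta_gF=\frac{1}{\sqrt{qp}}\,\frac{d}{ds}\Big(\sqrt{qp}\,\frac{s}{p}\,\frac{h'}{2}\Big).$$
Expanding, this equals $O(1)$ plus terms involving $p'$ and $h''=p'+v''$. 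From \eqref{eq: p(s) as deriv of A(s)}, $p'/p=(1-\tfrac1{sq})p-v'+\tfrac2s$. Since $p\to1$ and $v'\to1$, this gives $p'(s)=o(1)$, which is enough to conclude $\Delta_gF=o(s)$. Therefore
$$\mathcal V(s)=\Big(\tfrac14+o(1)\Big)s.$$
On compact sets $\mathcal V$ is bounded, since all the functions involved are smooth.

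\textbf{The integral.} Fix $\eps>0$. For $s\ge s_\eps$, both $\mathcal V$ and the volume density lie within factors $1\pm\eps$ of $s/4$ and $2\pi\sqrt{\log s}$ respectively. The region $s\le s_\eps$ contributes only $O(\lambda)$, since its volume is bounded and $\mathcal V$ is bounded below there. The main part is
$$\frac{1}{4\pi}\int_{s_\eps}^{4\lambda(1\pm\eps)}\Big(\lambda-\tfrac{s}{4}(1\pm\eps)\Big)_+\,2\pi(1\pm\eps)\sqrt{\log s}\,ds.$$
On the bulk of the range, $s\in[\lambda/\log\lambda,\,4\lambda]$, one has $\sqrt{\log s}=(1+o(1))\sqrt{\log\lambda}$, and the remaining range contributes $o(\lambda^2)$. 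So the integral is
$$(1+O(\eps)+o(1))\,\frac12\sqrt{\log\lambda}\int_0^{4\lambda}\Big(\lambda-\frac s4\Big)ds=(1+O(\eps)+o(1))\,\lambda^2\sqrt{\log\lambda}.$$
Letting $\eps\to0$ and applying Proposition \ref{prop:weyl} gives the claim.

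\textbf{Main obstacle.} The delicate step is showing $\Delta_gF=o(s)$. This requires derivative information, namely $p'\to0$ and $h''$ bounded, rather than only the limits established in Lemmas \ref{lem : p(s)} and \ref{lem : q(s)}. I would obtain it from the ODE \eqref{eq: p(s) as deriv of A(s)} together with $v''(s)=(1+s^2)^{-3/2}$. Because the leading term $\tfrac14\|\nabla_gF\|^2\sim s/4$ grows linearly, only this crude $o(s)$ bound is needed.
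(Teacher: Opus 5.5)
Your proposal is correct and follows essentially the same route as the paper: the same polar form $g=\frac{p}{s}\,ds^2+sq\,d\theta^2$ with volume element $\sqrt{pq}\,ds\,d\theta$, the estimates $\frac14\|\nabla_gF\|_g^2=(\frac14+o(1))s$ and $\Delta_gF=o(s)$, and then integration of the Weyl integrand from Proposition \ref{prop:weyl}. The only minor difference is that you get $\Delta_gF=o(s)$ directly from $p'=o(1)$ via \eqref{eq: p(s) as deriv of A(s)}, whereas the paper simplifies $\Delta_gF$ to the closed form $1+\frac{v'}{2q}+\frac{sv''}{2p}+\frac{s(p-v')^2}{4p}$; your $\varepsilon$-bookkeeping in the final integral is, if anything, more careful than the paper's.
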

	
	\begin{proof}
	 Recall the two functions $q(s) = (w')^{-1}(s)$ and $p(s) = sq'(s)$ from Section \ref{sec:nocontract}. Using these functions we rewrite the Riemannian metric $g = \nabla^2 W$ in radial coordinates $(s,\theta)$
		\begin{align} \label{eq : Hessian Metric in Radial Coordinates}
			g &= w''(r)\;dr^2+rw'(r)\;d\theta^2 = \frac{p(s)}{s}\;ds^2+sq(s)\;d\theta^2,
		\end{align}
	where we write $r = q(s)$, so that $dr = q'(s)\;ds = \frac{p(s)}{s}\;ds$ and using the fact $w''(r) = \frac{1}{q'(s)} = \frac{s}{p(s)}$. Since now we have $\det g = p(s)q(s)$, the Riemannian volume element can be written as
		\begin{equation} \label{eq: RiemannianVolumeElementInRadial}
			d\mathrm{Vol}_g = \sqrt{p(s)q(s)}\;ds\;d\theta.
		\end{equation}
		Fix a direction $e_1$, and as in \eqref{eq : h(s)} define
		$$f(s):=F(q(s)e_1) = \frac{1}{2}(w(q(s))+v(s)),$$
		and as in \eqref{eq:hderv} we have $f'(s) = \frac{1}{2}(p(s)+v'(s)) \xrightarrow{s\to \infty} 1.$
		
		
		We now estimate \eqref{eq: SchrodingerOperatorTerm} by combining the facts that $F$ is radial, that $f'(s) \xrightarrow{s\to \infty} 1$, and that $g^{ss} = \frac{s}{p(s)}$, where $g^{ss}$ is the component of $g^{-1}$,  and that according to Lemma \ref{lem : p(s)} $\lim\limits_{s \to \infty}p(s) = 1.$
		Putting these things together implies for $x \in \RR^2$ with $\|x\| = r$,
		\begin{align} \label{eq:lingradient}
			\| \nabla_g F(x)\|_g^2 &= g^{ss}(f'(s))^2 
			= \frac{s}{p(s)}(f'(s))^2
			= (1+o(1))s,
		\end{align}
		for some constant $C > 0$.

		We also need to evaluate the Laplacian term. Thus, when $\|x\| = r$, we have 
		$$\Delta_gF(x) = 	 \frac{1}{\sqrt{p(s)q(s)}}\frac{d}{ds}\left(s\sqrt{\frac{q(s)}{p(s)}}f'(s)\right)=  \frac{1}{\sqrt{p(s)q(s)}}\frac{d}{ds}\left(s\sqrt{\frac{q(s)}{p(s)}}\cdot \frac{1}{2}(p(s)+v'(s))\right).$$
		We now evaluate the derivative, so that
		\begin{align*}
			\Delta_g F 
			&= \frac{1}{2\sqrt{p(s)q(s)}}\left(\sqrt{\frac{q(s)}{p(s)}}(p(s)+v'(s))+s\left(\sqrt{\frac{q(s)}{p(s)}}\right)'(p(s)+v'(s))+s\sqrt{\frac{q(s)}{p(s)}}(p'(s)+v''(s))\right)\\
			&= \frac{1}{2\sqrt{p(s)q(s)}}\sqrt{\frac{q(s)}{p(s)}}\left( (p(s)+v'(s))+\frac{s}{2}(p(s)+v'(s))\left(\frac{q'(s)}{q(s)}-\frac{p'(s)}{p(s)}\right)+ s(p'(s)+v''(s))\right)\\
			&= \frac{1}{2p(s)}\left( p(s)+v'(s)+s(p'(s)+v''(s))+\frac{s}{2}(p(s)+v'(s))\left(\frac{q'(s)}{q(s)}-\frac{p'(s)}{p(s)}\right)\right)
		\end{align*}
		Apply now the identity $q'(s) =\frac{p(s)}{s}$, so that 
		\begin{align*}
			\Delta_g F &= \frac{1}{2p(s)}\left( p(s)+v'(s)+s(p'(s)+v''(s))+\frac{s}{2}(p(s)+v'(s))\left(\frac{p(s)}{sq(s)}-\frac{p'(s)}{p(s)}\right)\right)\\
			&= \frac{p(s)+v'(s)}{2p(s)}+\frac{sp'(s) + sv''(s)}{2p(s)}+\frac{p(s)+v'(s)}{4q(s)}-\frac{s(p(s)+v'(s))p'(s)}{4p(s)^2}\\
			&= \frac{p(s)+v'(s)}{2p(s)}+\frac{ sv''(s)}{2p(s)}+\frac{p(s)+v'(s)}{4q(s)}+\frac{s(p(s)-v'(s))p'(s)}{4p(s)^2}\\
			&= \frac{p(s)+v'(s)}{2p(s)}+\frac{ sv''(s)}{2p(s)}+\frac{p(s)+v'(s)}{4q(s)}+\frac{s(p(s)-v'(s))}{4p(s)}\left(p(s)-v'(s)-\frac{p(s)}{sq(s)}+\frac{2}{s}\right),
			\end{align*}
			where the last identity is \eqref{eq: p(s) as deriv of A(s)}. By expanding the last term we get
			\begin{align*}
			\Delta_g F&= \frac{p(s)+v'(s)}{2p(s)}+\frac{sv''(s)}{2p(s)}+\frac{p(s)+v'(s)}{4q(s)}+\frac{s(p(s)-v'(s))^2}{4p(s)}-\frac{p(s)-v'(s)}{4q(s)}+\frac{p(s)-v'(s)}{2p(s)}\\
			&= \frac{2p(s)+v'(s)-v'(s)}{2p(s)}+\frac{2v'(s)}{4q(s)}+\frac{sv''(s)}{2p(s)}+\frac{s(p(s)-v'(s))^2}{4p(s)}\\
			&= 1+\frac{v'(s)}{2q(s)}+\frac{sv''(s)}{2p(s)}+\frac{s(p(s)-v'(s))^2}{4p(s)} = o(s),
		\end{align*}
		where we've again used Lemma \ref{lem : p(s)} and Lemma \ref{lem : q(s)} for the final bound, along with $v'(s) = (\sqrt{1+s^2})' = 1 + o(1)$ and $v''(s) = (\sqrt{1+s^2})'' = O\left(\frac{1}{s^3}\right)$.
		Thus, along with \eqref{eq:lingradient} we see
		\begin{align} \label{eq:Vexp}
			\mathcal{V}(s) = \frac{1}{4}\| \nabla_g F\|_g^2-\frac{1}{2}\Delta_gF
			= (1+o(1))\frac{s}{4}.
		\end{align}
		By Proposition \ref{prop:weyl}, and by moving to radial coordinates, we now have,
		\begin{align*}
N_M(\lambda) = \frac{1+o(1)}{4\pi}\int_M (\lambda - \mathcal{V}(x))_+d\mathrm{Vol}_g(x)	 &= \frac{1+o(1)}{2}\int_0^\infty (\lambda - \mathcal{V}(s))_+\sqrt{p(s)q(s)}ds\\
		&= \frac{1+o(1)}{2}\int_0^\infty (\lambda - \mathcal{V}(s))_+\sqrt{\log(s)}ds\\
		&= (1+o(1))\lambda^2\sqrt{\log(\lambda)}
		\end{align*}
	The second identity is \eqref{eq: RiemannianVolumeElementInRadial}, the third follows from Lemmas \ref{lem : p(s)} and \ref{lem : q(s)}, and the final bound follows from the estimates on $\mathcal{V}$ in \eqref{eq:Vexp} and integration.
	\end{proof}
	Theorem \ref{thm:nocomparison} now follows in a straightforward manner from Lemma \ref{lem : CountingLowerBound}.
	
	\begin{proof}[Proof of Theorem \ref{thm:nocomparison}]
	Inverting the relation
		\begin{align*}
			N_M(\lambda)= (1+o(1))\lambda^{2}\sqrt{\log(\lambda)}
		\end{align*}
	from Lemma \ref{lem : CountingLowerBound} shows that for $k$ large enough
		\begin{align*}
		\lambda_k(\RR^2,g,\mu)  =(\sqrt[4]{2}+o(1))\frac{\sqrt{k}}{\log(k)^{1/4}}.
		\end{align*}
		Meanwhile, the eigenvalues of the Gaussian are explicit and satisfy (see \cite{milman2018spectral} for example),
		\begin{align*}
			\lambda_k(\RR^2, \| \cdot \|,\gamma_{1/2}^2) &= \Theta\left(\sqrt{k}\right).
		\end{align*}
		Combining the estimates gives
		\begin{align*}
			\lim_{k\to\infty}
			\frac{\lambda_k(\RR^2,g,\mu)}
			{\lambda_k(\RR^2,\|\cdot\|,\gamma^2_{1/2})}
			&=0.
		\end{align*}
		
	\end{proof}
\subsection{Establishing the asymptotic Weyl law}
To finish the proof it remains to prove Proposition \ref{prop:weyl}. 
	\begin{proof}[Proof of Proposition \ref{prop:weyl}]
		The proof of the Weyl law follows by verifying the conditions in \cite[Theorem 1.10]{dai2025weyl}. This requires us to both prove that $(\RR^2,\nabla^2W)$ has bounded geometry and to prove that the potential $\mathcal{V}$ satisfies appropriate growth and regularity conditions.
		\paragraph{Completeness:} We first claim that $(\RR^2, \nabla^2 W)$ is a complete Riemannian manifold. Indeed as in \eqref{eq: RiemannianVolumeElementInRadial}, $g = \nabla^2 W$ has two eigenvalues, $w''(r) = \frac{s}{p(s)}$ and $\frac{w'(r)}{r} = \frac{s}{q(s)}$. By Lemmas \ref{lem : p(s)} and \ref{lem : q(s)} we see that both eigenvalues diverge and so $\nabla^2 W \succeq a\mathrm{I}_d$, for some $a>0$. Now for any absolutely continuous curve $\gamma$,
		$$
		\mathrm{Length}_g(\gamma)
	\geq
	\sqrt{a}\,\mathrm{Length}_{\|\cdot\|}(\gamma).
		$$
		Equivalently,
		$$
		d_g(x,y)\ge \sqrt{a}\,\|x-y\|.
		$$
		Therefore if $x_n$ is Cauchy sequence with respect to $d_g$, it is also Cauchy with respect to the Euclidean metric, and hence $\|x_n - x\|\to 0$, for some
		$x\in\RR^2$. The
		Riemannian and Euclidean distances are locally comparable near $x$, so $d_g(x_n,x)\to 0$ as well. Hence $(\RR^2,\nabla^2W)$ is complete.
		
		\paragraph{Bounded curvature:}
		We next prove that $(\RR^2,g)$ has bounded curvature as in \cite[Definition 1.2(2)]{dai2025weyl}. We thus need to bound the Riemannian curvature tensor, together with its first covariant derivative. For that we shall need to control the higher derivatives of $p$. Recall from \eqref{eq: p(s) as deriv of A(s)} that
		$$
		\frac{p'(s)}{p(s)}
		=
		\left(1-\frac1{sq(s)}\right)p(s)-v'(s)+\frac2s.
		$$
		Below we shall repeatedly use the estimates $p(s) = 1 +o(1)$ and $q(s) = (1+o(1))\log(s)$ as in Lemmas \ref{lem : p(s)} and \ref{lem : q(s)}. Set
		$
		m(s):=
		\frac{v'(s)-2/s}{1-\frac1{s q(s)}},
		$
		so the above becomes
		\begin{equation} \label{eq:pderv}
		p'(s)
		=
		p(s)\left(1-\frac1{s q(s)}\right)(p(s)-m(s)).
		\end{equation}
		Since $v'(s)= (\sqrt{1+s^2})' = 1+O\left(\frac{1}{s^2}\right)$, we have
		$$
		m(s)
		=
		1-\frac2s+\frac1{s q(s)}+O\left(\frac{1}{s^2}\right)
		\text{ and }
		m'(s)=O\left(\frac{1}{s^2}\right).
		$$
		Moreover $m(s)\xrightarrow{s\to\infty}1$, and hence $p(s)-m(s)\xrightarrow{s\to\infty}0$. We now write a differential equality for $p-m$, 
		$$
		(p(s)-m(s))'
		=
		p(s)\left(1-\frac1{s q(s)}\right)(p(s)-m(s))-m'(s).
		$$
		Solving this equation with the boundary condition $\lim\limits_{s\to\infty}\left(p(t) - m(t)\right) = 0$ gives,
		$$
		p(s)-m(s)
		=
		\int_s^\infty
		m'(u)\exp\left(
		-\int_s^u p(t)\left(1-\frac1{tq(t)}\right)\,dt
		\right)\,du = O\left(\frac{1}{s^2}\right),
		$$
		where we used $m'(u)=O\left(\frac{1}{u^2}\right)$. Therefore
		$$
		p(s)
		=
		1-\frac2s+\frac1{s q(s)}+O\left(\frac{1}{s^2}\right).
		$$
		In particular, going back to \eqref{eq: h(s) modified by estimates},
		$$
		p'(s)=O\left(\frac{1}{s^2}\right).
		$$
		Differentiating \eqref{eq:pderv} once more, and applying the above bounds along with $q'(s) = \frac{p(s)}{s} = O\left(\frac{1}{s}\right)$ also gives
		$$
		p''(s)=O\left(\frac{1}{s^2}\right).
		$$
		Consequently,
		\[
		q''(s)= \frac{p'(s)}{s} - \frac{p(s)}{s^2} = O\left(\frac{1}{s^2}\right).
		\]
		We use these bounds to bound the curvature tensor. Since we are working in dimension $2$ it is enough to understand the Gauss curvature. For a rotationally invariant metric
		$$
		g=F(s)ds^2+G(s)d\theta^2,
		$$
		the Gauss curvature is
		$$
		K(s)
		=
		-\frac1{2\sqrt{F(s)G(s)}}
		\frac{d}{ds}
		\left(
		\frac{G'(s)}{\sqrt{F(s)G(s)}}
		\right).
		$$
		In our case, as in \eqref{eq : Hessian Metric in Radial Coordinates},
		$$
		F(s)=\frac{p(s)}{s}
		\quad \text{ and } \quad
		G(s)=s q(s).
		$$
		So,
		\begin{equation} \label{eq:gausscurv}
		K(s)
		=
		-\frac{1}{2\sqrt{p(s)q(s)}}
		\frac{d}{ds}
		\left(
		\frac{q(s)+p(s)}{\sqrt{p(s)q(s)}}
		\right)=:	-\frac{1}{2\sqrt{p(s)q(s)}}
		\frac{d}{ds}H(s).
		\end{equation}
		The above bounds on $p,q$ and their derivatives imply
		$$
		|H'(s)|=O\left(\frac{1}{s\sqrt{q(s)}}\right),
		$$
		which leads to the uniform bound on the Gauss curvature
		$$
		|K(s)|
		=
		\frac{|H'(s)|}{2\sqrt{p(s)q(s)}}
		=
		O\left(\frac{1}{s q(s)}\right) = O(1).
		$$
		We also need a bound on the first covariant derivative of the curvature tensor, which again reduces to $\|\nabla_g K\|_g$.
		Taking a second derivative of $H$ and invoking the bounds on $p''$ and $q''$ now gives 
		$$
		|H''(s)|=O\left(\frac{\sqrt{q(s)}}{s^2}\right).
		$$	
		Differentiating the expression for $K$ we can conclude with,
		$$
		K'(s)
		=
		-\frac{H''(s)}{2\sqrt{p(s)q(s)}}
		+
		\frac{H'(s)(p(s)q(s))'}{4(p(s)q(s))^{3/2}} = O\left(\frac{1}{s^2}\right)
		$$
		Finally, since $K$ is radial and $g^{ss}=\frac{s}{p(s)}$, we have
		$$
		|\nabla_gK|_g^2
		=
		g^{ss}(K'(s))^2
		=
		\frac{s}{p(s)}(K'(s))^2
		=
		O\left(\frac{1}{s^3}\right).
		$$ Hence the curvature tensor and its first covariant derivative are uniformly
		bounded, which is the curvature part of the bounded geometry condition in \cite[Definition 1.2(2)]{dai2025weyl}.
		\paragraph{Injectivity radius:}
		Next we bound the injectivity radius from below. 
		For this we shall use the results of Cheeger, Gromov, and Taylor \cite[Theorem 4.7]{cheeger1982finite}, according to which, and in light of the bounded curvature of the previous step, it is enough to show that there exist
		$r_0,v_0>0$ such that
		\begin{equation} \label{eq:gromov}
		\mathrm{Vol}_g(B_g(x,r_0))\geq v_0
	\text{ for all }x\in\RR^2,
		\end{equation}
		where $B_g(x,r_0)$ is the $d_g$-ball of radius $r_0$ around $x$.
		
		Introduce the geodesic radial coordinate
		\[
		\rho(s):=\int_0^s\sqrt{\frac{p(u)}u}\,du .
		\]
		Since $p(s)\xrightarrow{s\to \infty}1$, we have $\rho(s)= (1+o(1))2\sqrt{s}$. Using $\rho$ for a change of variables transforms the metric \eqref{eq : Hessian Metric in Radial Coordinates} into
		$$
		g=d\rho^2+a(\rho)^2\,d\theta^2,
		$$
		where
		$a(\rho(s))=\sqrt{s q(s)}.$
		Moreover, since $q'(s) = \frac{p(s)}{s}$,
		$$
		\frac{d}{d\rho}\log(a(\rho))=\frac{ds}{d\rho}\frac{d}{ds}\log(a(\rho(s))
		=
		\sqrt{\frac{s}{p(s)}}\frac{d}{ds}\log\sqrt{s q(s)}
		=
		\frac{1}{2\sqrt{p(s)s}}
		\left(1+\frac{p(s)}{q(s)}\right)
		=
		O\left(\frac{1}{\rho(s)}\right).
		$$
		Therefore, since the last term is $o(1)$, for all sufficiently large \(\rho_0\),
		$$
		|\rho-\rho_0| \leq \frac{1}{C'}\implies \frac{1}{C'}\leq \frac{a(\rho)}{a(\rho_0)} \leq C',
		$$
		for some numeric constant $C'>0$.
		Fix $x=(\rho_0,\theta_0)$ with $\rho_0$ large, and set
		$$
		u=\rho-\rho_0 
		\quad \text{ and }\quad
		y=a(\rho_0)(\theta-\theta_0).
		$$
		In these coordinates,
		$$
		g
		=
		du^2+
		\left(\frac{a(\rho_0+u)}{a(\rho_0)}\right)^2dy^2.
		$$
		Hence, on a fixed rectangle
		$Q=\{|u|\le c,\ |y|\le c\}$,
		with $c>0$ sufficiently small, the metric is comparable to the Euclidean metric and the volume form is comparable to $dudy$. Choosing $c$ smaller if necessary,
		we have
		$
		Q\subset B_g(x,r_0)
		$
		for some fixed $r_0>0$, independent of $x$. Consequently,
		$$
		\mathrm{Vol}_g(B_g(x,r_0))\ge \mathrm{Vol}_g(Q)\geq v_0
		$$
		for all $x$ outside a compact set, with $v_0>0$ independent of $x$. Since the metric is positive definite, this, together with a compactness argument, establishes \eqref{eq:gromov} on the entire $\RR^2$, and through \cite[Theorem 4.7]{cheeger1982finite} we obtain a lower bound on the injectivity radius. Coupled with the previous steps we proved that $(\RR^2,\nabla^2W)$ satisfies the bounded geometry assumption of \cite[Definition 1.2]{dai2025weyl}.
		\paragraph{Diverging potential:} From \eqref{eq:Vexp} 
		and since we've used the change of coordinates $ r = q(s)$,
		
		$$\mathcal{V}(re_1) = \mathcal{V}(q(s)e_1)=(1+o(1))\frac{s}{4}.
		$$
		Since $s=w'(r)\xrightarrow{r\to\infty}\infty$, we get $$\mathcal{V}(x)\xrightarrow{\|x\|\to\infty}\infty.$$
		\paragraph{Doubling condition:}
		Suppose $\lambda$ is large and let
		\begin{align*}
			\sigma(\lambda):=\mathrm{Vol}_g\{x:\mathcal V(x)\le \lambda\} = \int_M {\bf 1}_{\mathcal{V}\leq \lambda} d\mathrm{Vol}_g(x) \geq (1+o(1))2\pi \int_0^{(4-\delta)\lambda}\sqrt{p(s)q(s)}ds,
		\end{align*}
		where $\delta$ is some small fixed number, and where we have again used \eqref{eq:Vexp} as well as the expression for the Riemannian volume \eqref{eq: RiemannianVolumeElementInRadial}. Applying Lemmas \ref{lem : p(s)} and \ref{lem : q(s)} we deduce
		$$
		\sigma(\lambda)
		=
		\Omega(\lambda\sqrt{\log\lambda}).
		$$
		We can apply an analogous argument for an upper bound, which then implies the doubling condition \(\sigma(2\lambda)\leq C\sigma(\lambda)\) for large \(\lambda\), and for some $C>0$.
		
		\paragraph{Regularity condition:}
		Finally we verify the regularity condition of \cite[Definition 1.4]{dai2025weyl}.
		Adding a constant to $\mathcal V$ only shifts the spectrum, and does not affect the
		asymptotics of \eqref{eq: WeylLaw}. Thus, because of \eqref{eq:Vexp} we may assume $\mathcal V \geq 1$.
		Combining the expressions from \eqref{eq:lingradient} and \eqref{eq:Vexp} we have
		$$
		\mathcal V(s)
		=
		\frac{1}{4}\frac{s}{p(s)}
		\left(\frac{p(s)+v'(s)}{2}\right)^2
		-
		\frac{1}{2}
		\left(
		1+\frac{v'(s)}{2q(s)}
		+\frac{s v''(s)}{2p(s)}
		+\frac{s(p(s)-v'(s))^2}{4p(s)}
		\right).
		$$
		Moreover, recall the estimates from the previous steps
		$$
		p(s)=1-\frac{2}{s}+\frac{1}{s q(s)}+O\left(\frac{1}{s^2}\right),
		\quad
		p'(s)=O\left(\frac{1}{s^2}\right),
		\quad
		q(s)= (1+o(1))\log s,
		\quad
		q'(s)=\frac{p(s)}s,
		$$
		and combine  with
		$$
		v = \sqrt{1+s^2}-1,\quad v'(s)=1+O\left(\frac{1}{s^2}\right),\quad
		v''(s)=O\left(\frac{1}{s^3}\right),\quad
		v'''(s)=O\left(\frac{1}{s^4}\right),
		$$
		to get,
		$$
		\mathcal V'(s)=\frac{1}{4}+O\left(\frac{1}{s}\right).
		$$
		Since $g^{ss}=\frac{s}{p(s)}$, this implies through \eqref{eq:Vexp},
		$$
		\|\nabla_g \mathcal V\|_g^2
		=
		g^{ss}(\mathcal V'(s))^2
		=
		\frac{s}{p(s)}(\mathcal V'(s))^2
		\leq C s
		\leq 10C \mathcal V(s).
		$$
		Let $x,y\in\RR^2$ with $d_g(x,y)<\alpha$, where $\alpha>0$ is any fixed number. Since
		$$
		\|\nabla_g \sqrt{\mathcal V}\|_g
		=
		\frac{\|\nabla_g \mathcal V\|_g}{2\sqrt{ \mathcal V}}
		\leq 5C,
		$$
		we have
		$$
		|\sqrt{\mathcal V(x)}-\sqrt{\mathcal V(y)}|
		\leq 5C d_g(x,y)
		\leq 5C\alpha.
		$$
		Moreover,
		$\sqrt{\mathcal{V}(y)}\leq \sqrt{\mathcal{V}(x)}+C\alpha.$
		Since \(V\ge1\), this gives 
		$$\sqrt{\mathcal V(x)}+\sqrt{\mathcal V(y)} \leq (C\alpha+2)\sqrt{\mathcal V(x)}.$$
		Therefore
		\begin{align*}
		|\mathcal{V}(x)-\mathcal{V}(y)|
		=
		|\sqrt{\mathcal{V}(x)}-\sqrt{\mathcal{V}(y)}|
		\left(\sqrt{\mathcal{V}(x)}+\sqrt{\mathcal{V}(y)}\right)  \leq
		5C\alpha(C\alpha+2)\sqrt{\mathcal{V}(x)}.
		\end{align*}
		Equivalently,
		$$
		|\mathcal{V}(x)-\mathcal{V}(y)|
		\leq
		\mathcal{V}(x)\eta(\mathcal{V}(x)),
		$$
		where $\eta(t):=5C\alpha(C\alpha+2)t^{-1/2}$.
		Since $\mathcal{V} \geq 1$, we can modify $\eta$ to be
		continuous and strictly decreasing, and so it satisfies the regularity assumption of \cite[Definition 1.4]{dai2025weyl} with $\beta = 0$.
		\paragraph{The Weyl law for $M$:}
		After verifying all the conditions above, \cite[Theorem 1.10]{dai2025weyl} gives the asymptotic law in \eqref{eq: WeylLaw} for the Schr\"odinger operator $H$ from \eqref{eq: SchrodingerOperatorTerm}. Since $H$ is unitarily equivalent to $-\Delta_{g,\mu}$ the proof concludes.
	\end{proof}
	\begin{remark} \label{rmk:curvature}
		Going back to the expression for the Gauss curvature in \eqref{eq:gausscurv}, we can use the established bounds on $p,q$ and their derivatives to show that $K$ must be negative for some points. Since we are in dimension $2$, then the same is true for the Ricci curvature. In fact, this is a common occurrence for such transportation manifolds, see \cite{klartag2017remarks}.
	\end{remark}
	\bibliographystyle{plain}
	\bibliography{bib}{}

\begin{thebibliography}{10}

\bibitem{aryan2026obstructions}
Shrey Aryan.
\newblock Spectral obstructions to contracting transport maps on curved spaces.
\newblock {\em arXiv preprint arXiv:2605.24705}, 2026.

\bibitem{brenier1991polar}
Yann Brenier.
\newblock Polar factorization and monotone rearrangement of vector-valued functions.
\newblock {\em Comm. Pure Appl. Math.}, 44(4):375--417, 1991.

\bibitem{brigati2025heat}
Giovanni Brigati and Francesco Pedrotti.
\newblock Heat flow, log-concavity, and {L}ipschitz transport maps.
\newblock {\em Electron. Commun. Probab.}, 30:Paper No. 71, 12, 2025.

\bibitem{caffarelli2000monotonicity}
Luis~A. Caffarelli.
\newblock Monotonicity properties of optimal transportation and the {FKG} and related inequalities.
\newblock {\em Comm. Math. Phys.}, 214(3):547--563, 2000.

\bibitem{carlier2026optimal}
Guillaume Carlier, Alessio Figalli, and Filippo Santambrogio.
\newblock On optimal transport maps between {$\frac{1}{d}$}-concave densities.
\newblock {\em Ann. Inst. H. Poincar\'e{} C Anal. Non Lin\'eaire}, 43(2):483--500, 2026.

\bibitem{cheeger1982finite}
Jeff Cheeger, Mikhail Gromov, and Michael Taylor.
\newblock Finite propagation speed, kernel estimates for functions of the {L}aplace operator, and the geometry of complete {R}iemannian manifolds.
\newblock {\em J. Differential Geometry}, 17(1):15--53, 1982.

\bibitem{chewi2024entropic}
Sinho Chewi and Aram-Alexandre Pooladian.
\newblock An entropic generalization of {C}affarelli's contraction theorem via covariance inequalities.
\newblock {\em C. R. Math. Acad. Sci. Paris}, 361:1471--1482, 2023.

\bibitem{chitour2024weyl}
Y.~Chitour, D.~Prandi, and L.~Rizzi.
\newblock Weyl's law for singular {R}iemannian manifolds.
\newblock {\em J. Math. Pures Appl. (9)}, 181:113--151, 2024.

\bibitem{colombo2021bounds}
Maria Colombo and Max Fathi.
\newblock Bounds on optimal transport maps onto log-concave measures.
\newblock {\em J. Differential Equations}, 271:1007--1022, 2021.

\bibitem{colombo2017lipschitz}
Maria Colombo, Alessio Figalli, and Yash Jhaveri.
\newblock Lipschitz changes of variables between perturbations of log-concave measures.
\newblock {\em Ann. Sc. Norm. Super. Pisa Cl. Sci. (5)}, 17(4):1491--1519, 2017.

\bibitem{cordero2015moment}
D.~Cordero-Erausquin and B.~Klartag.
\newblock Moment measures.
\newblock {\em J. Funct. Anal.}, 268(12):3834--3866, 2015.

\bibitem{cordero2019regularity}
Dario Cordero-Erausquin and Alessio Figalli.
\newblock Regularity of monotone transport maps between unbounded domains.
\newblock {\em Discrete Contin. Dyn. Syst.}, 39(12):7101--7112, 2019.

\bibitem{dai2025weyl}
Xianzhe Dai and Junrong Yan.
\newblock Weyl {L}aw for {S}chr\"odinger {O}perators on {N}oncompact {M}anifolds, {H}eat {K}ernel, and {K}aramata-{H}ardy-{L}ittlewood {T}heorem.
\newblock {\em arXiv preprint arXiv:2504.15551}, 2025.

\bibitem{fathi2024growth}
Max Fathi.
\newblock Growth estimates on optimal transport maps via concentration inequalities.
\newblock {\em arXiv preprint arXiv:2407.11951}, 2024.

\bibitem{fathi2020proof}
Max Fathi, Nathael Gozlan, and Maxime Prod'homme.
\newblock A proof of the {C}affarelli contraction theorem via entropic regularization.
\newblock {\em Calc. Var. Partial Differential Equations}, 59(3):Paper No. 96, 18, 2020.

\bibitem{fathi2024transportation}
Max Fathi, Dan Mikulincer, and Yair Shenfeld.
\newblock Transportation onto log-{L}ipschitz perturbations.
\newblock {\em Calc. Var. Partial Differential Equations}, 63(3):Paper No. 61, 25, 2024.

\bibitem{ge2025generalization}
Yuxin Ge and Jordan Serres.
\newblock A generalization of caffarelli's contraction theorem to nearly spherical manifolds.
\newblock {\em arXiv preprint arXiv:2512.01496}, 2025.

\bibitem{HuaSongTudose2026}
Dongming~Merrick Hua, Antoine Song, and Stefan Tudose.
\newblock On {T}alagrand's {C}onvexity {C}onjecture.
\newblock {\em arXiv preprint arXiv:2605.10908}, 2026.

\bibitem{kim2012generalization}
Young-Heon Kim and Emanuel Milman.
\newblock A generalization of {C}affarelli's contraction theorem via (reverse) heat flow.
\newblock {\em Math. Ann.}, 354(3):827--862, 2012.

\bibitem{klartag2017remarks}
B.~Klartag and A.~V. Kolesnikov.
\newblock Remarks on curvature in the transportation metric.
\newblock {\em Anal. Math.}, 43(1):67--88, 2017.

\bibitem{klartag2014logarithm}
Bo'az Klartag.
\newblock Logarithmically-concave moment measures {I}.
\newblock In {\em Geometric aspects of functional analysis}, volume 2116 of {\em Lecture Notes in Math.}, pages 231--260. Springer, Cham, 2014.

\bibitem{kolesnikov2013sobolev}
A.~V. Kolesnikov.
\newblock On {S}obolev regularity of mass transport and transportation inequalities.
\newblock {\em Theory Probab. Appl.}, 57(2):243--264, 2013.

\bibitem{kolesnikov2011mass}
Alexander~V Kolesnikov.
\newblock Mass transportation and contractions.
\newblock {\em arXiv preprint arXiv:1103.1479}, 2011.

\bibitem{kolesnikov2014hessian}
Alexander~V. Kolesnikov.
\newblock Hessian metrics, {$CD(K,N)$}-spaces, and optimal transportation of log-concave measures.
\newblock {\em Discrete Contin. Dyn. Syst.}, 34(4):1511--1532, 2014.

\bibitem{lopez2025bakry}
Pablo L\'opez-Rivera.
\newblock A {B}akry-\'emery approach to {L}ipschitz transportation on manifolds.
\newblock {\em Potential Anal.}, 62(2):331--353, 2025.

\bibitem{mikulincer2023lipshitz}
Dan Mikulincer and Yair Shenfeld.
\newblock On the {L}ipschitz properties of transportation along heat flows.
\newblock In {\em Geometric aspects of functional analysis}, volume 2327 of {\em Lecture Notes in Math.}, pages 269--290. Springer, Cham, [2023] \copyright 2023.

\bibitem{milman2018spectral}
Emanuel Milman.
\newblock Spectral estimates, contractions and hypercontractivity.
\newblock {\em J. Spectr. Theory}, 8(2):669--714, 2018.

\bibitem{neeman2022lipschitz}
Joe Neeman.
\newblock Lipschitz changes of variables via heat flow.
\newblock {\em arXiv preprint arXiv:2201.03403}, 2022.

\bibitem{serres2026contractive}
Jordan Serres.
\newblock Contractive transport maps from {$\Bbb{S}^{2}$} to nearly spherical surfaces with positive {R}icci curvature.
\newblock {\em Nonlinear Anal.}, 267:Paper No. 114058, 8, 2026.

\bibitem{shenfeld2024exact}
Yair Shenfeld.
\newblock Exact renormalization groups and transportation of measures.
\newblock {\em Ann. Henri Poincar\'e}, 25(3):1897--1910, 2024.

\bibitem{simon1983nonclassical}
Barry Simon.
\newblock Nonclassical eigenvalue asymptotics.
\newblock {\em J. Funct. Anal.}, 53(1):84--98, 1983.

\bibitem{song2026}
Antoine Song.
\newblock Sum of {G}aussian vectors and large sets.
\newblock {\em arXiv preprint arXiv:2602.22342}, 2026.

\bibitem{vandenberg1984horn}
M.~van~den Berg.
\newblock On the spectrum of the {D}irichlet {L}aplacian for horn-shaped regions in {${\bf R}\sp{n}$}\ with infinite volume.
\newblock {\em J. Funct. Anal.}, 58(2):150--156, 1984.

\bibitem{villani2003topics}
C\'{e}dric Villani.
\newblock {\em Topics in optimal transportation}, volume~58 of {\em Graduate Studies in Mathematics}.
\newblock American Mathematical Society, Providence, RI, 2003.

\end{thebibliography}
	
\end{document}